\documentclass[12pt]{amsart}
\usepackage[utf8]{inputenc}
\usepackage[T1]{fontenc}
\usepackage[all]{xy}
\usepackage{lipsum}
\usepackage{url}
\usepackage{tikz}
\usepackage{stackrel}
\usepackage{color}

\usepackage{amsmath,amsthm,amssymb}

\usepackage{xcolor}
\usepackage{graphicx}
\newcommand{\aspas}[1]{``{#1}''}

\newtheorem{theorem}{Theorem}[section]
\newtheorem{lemma}[theorem]{Lemma}
\newtheorem{example}[theorem]{Example}
\newtheorem{proposition}[theorem]{Proposition}
\theoremstyle{definition}
\newtheorem{definition}[theorem]{Definition}

\newtheorem{remark}[theorem]{Remark}
\newtheorem{corollary}[theorem]{Corollary}

\numberwithin{equation}{section}

\setlength{\oddsidemargin}{.25cm}
\setlength{\evensidemargin}{.25cm}
\setlength{\textwidth}{6.3in}
\setlength{\textheight}{8.2in}

\begin{document}


\renewcommand{\bf}{\bfseries}
\renewcommand{\sc}{\scshape}

\title[Sectional Category and the parametrized Borsuk-Ulam property]%
{Sectional Category and the parametrized Borsuk-Ulam property}



\author{Cesar A. Ipanaque Zapata}
\author{Daciberg L. Gonçalves} 
\address{Departamento de Matem\'atica, IME Universidade de S\~ao Paulo\\
Rua do Mat\~ao 1010 CEP: 05508-090 S\~ao Paulo-SP, Brazil}
\email{cesarzapata@usp.br}
\email{dlgoncal@ime.usp.br}

\subjclass[2020]{Primary 55M20, 55M30; Secondary 57M10, 55R80, 55R35.}                                    %

\keywords{Borsuk-Ulam theorem, Parametrized Borsuk-Ulam property, Sectional Category, L-S Category, (Fibre-wise) configuration spaces, Classifying maps}
\thanks {The first author would like to thank grant\#2022/03270-8 and grant\#2022/16695-7, S\~{a}o Paulo Research Foundation (FAPESP) for financial support. The second  author was partially supported by the FAPESP \aspas{Projeto
Tem\'atico-FAPESP Topologia Alg\'ebrica, Geom\'etrica e Diferencial} 2016/24707-4 (São Paulo-Brazil).}

\begin{abstract} In this paper, for fibrations $p:E\to B$, $p':E'\to B$ over $B$ and a free involution $\tau:E\to E$ which satisfy the equality $p\circ\tau=p$, we discover a connection between the sectional category of the double covers $q:E\to E/\tau$ and $q^Y:F_{p'}(E',2)\to F_{p'}(E',2)/\mathbb{Z}_2$ from the $2$-ordered fibre-wise configuration space $F_{p'}(E',2)$ to its unordered quotient $ F_{p'}(E',2)/\mathbb{Z}_2$, and the parametrized Borsuk-Ulam property (PBUP) for the triple $\left((p,\tau);p'\right)$. Explicitly, we demonstrate that the triple $\left((p,\tau);p'\right)$ satisfies the PBUP if the sectional category of $q$ is bigger than the sectional category of $q^{p'}$. This property connects a standard problem in parametrized Borsuk-Ulam theory to current research trends in sectional category. As applictions of our results,   we explore the PBUP for  $E,  E'$   one of the following fibrations: trivial fibration,  Hopf fibration and the Fadell-Neuwirth fibration. 
\end{abstract}
\maketitle


\section{Introduction}\label{secintro}
In this article \aspas{space} means a topological space, and by a \aspas{map} we will always mean a continuous map. Fibrations are taken in the Hurewicz sense.

Let $\left((p,\tau);p'\right)$ be a triple where $p:E\to B$, $p':E'\to B$ are fibrations over $B$ and $\tau:E\to E$ is a free involution with $p\circ\tau=p$. 
\begin{eqnarray*}
\xymatrix{ 
       E  \ar[rr]^{\tau}\ar[rd]_{p} & & E\ar[ld]^{p} \\
      &B & 
       }
\end{eqnarray*}

Motivated by \cite{dold1988} (cf. \cite{goncalves2023}), we say that $\left((p,\tau);p'\right)$ satisfies the \textit{parametrized Borsuk-Ulam property} (which we shall routinely abbreviate to PBUP) if  for every map $f:E\to E'$ over $B$ (i.e. $f$ satisfies $p'\circ f=p$), 

\begin{eqnarray*}
\xymatrix{ 
       E  \ar[rr]^{f}\ar[rd]_{p} & & E'\ar[ld]^{p'} \\  %
      &B & 
       }
\end{eqnarray*}

\noindent there exists a point $x\in E$ such that $f(\tau(x))=f(x)$.

\begin{remark}
    \noindent\begin{enumerate}
        \item In the case that $p$ is the sphere-bundle of a vector bundle and $p'$ is a vector bundle, both over a paracompact base space $B$, Dold studies the dimension of the set $Z_f=\{e\in E:~f(e)=f(\tau(e))\}$ \cite[Corollary 1.5]{dold1988}.
   \item If $B$ is a point, then $\left((p,\tau);p'\right)$  satisfies the PBUP if and only if  $\left((E,\tau);E'\right)$ satisfies the Borsuk-Ulam property (BUP) as defined in \cite{zapata2023}.         
    \end{enumerate}
\end{remark}

For Hausdorff spaces $X, Y$ and a fixed-point free involution $\tau:X\to X$, in \cite{zapata2023}, the authors discover a connection between the sectional category of the double covers $q:X\to X/\tau$ and $q^Y:F(Y,2)\to D(Y,2)$ from the ordered configuration space $F(Y,2)$ to its unordered quotient $D(Y,2)=F(Y,2)/\Sigma_2$ \cite{fadell1962configuration}, and the Borsuk-Ulam property (BUP) for the triple $\left((X,\tau);Y\right)$. In this paper, we present a connection between sectional category and the parametrized Borsuk-Ulam property.

\medskip The main result of this work  is:

\medskip\noindent \textbf{Theorem 3.10} (Principal theorem). Suppose that $E$ and $E'$ are Hausdorff spaces. If the triple $\left((p,\tau);p'\right)$ does not satisfy the PBUP or the triple $\left((E,\tau);F'\right)$ does not satisfy the BUP then \[\mathrm{secat}(q_E)\leq \mathrm{secat}(q^{p'}).\] Equivalently, if $\mathrm{secat}(q_E)>\mathrm{secat}(q^{p'})$ then the triple $\left((p,\tau);p'\right)$ satisfies the PBUP and the triple $\left((E,\tau);F'\right)$ satisfies the BUP.

\medskip This gives conditions in terms of sectional category to have the PBUP. In the case that $p'=p$ we obtain Remark~\ref{rem:p-p}. Then we give some applications where  the fibrations involved    are one of the following fibrations: trivial fibration,  Hopf fibration and the Fadell-Neuwirth fibration. 

\medskip In  more details the content of this paper is organized as follows: In Section~\ref{sn}, we recall the notion of sectional category (Definition~\ref{defn:sectional-category}) together with basic results about this numerical invariant (Lemma~\ref{prop-sectional-category}, Lemma~\ref{prop-secat-map}, Example~\ref{projective}). A key statement for our objectives is Remark~\ref{pullback}. We present the notion of comparable between involutions (Definition~\ref{comparable}) and we shows that secat is invariant under comparable relation (Lemma~\ref{comparable-invariant}). In Section~\ref{bu}, we present the notion of parametrized Borsuk-Ulam property (Definition~\ref{defn:pbup}) together with basic results about this property via sectional category. Lemma~\ref{bup-secat} is a motivation for the present work. It presents a connection between sectional category and the Borsuk-Ulam property. Remark~\ref{basic-remark} says that if the BUP holds at the level of fibers then the PBUP holds. As a direct consequence, we obtain Proposition~\ref{borsuk-and-parametrized}. Proposition~\ref{inequality-secat-pullback} stays natural inequalities for secat of certain quotient maps. Proposition~\ref{prop:lower-bound-secat-zf} gives a lower bound for $\mathrm{secat}\left(q_{Z_f}\right)$. In Proposition~\ref{top-pbup} we present a topological criterion for the parametrized BUP. It is essential for our purposes. Then 
we have our  main result  which   is Theorem~\ref{theorem-principal}. 
In addition, as a direct consequence of our principal theorem, we have Proposition~\ref{prop:secat-quotient-conf}. In Section~\ref{ap}, we present applications of our results. Remark~\ref{trivial-fibration} shows that Theorem~\ref{theorem-principal} is a generalization, in the parametrized setting, of Lemma~\ref{bup-secat}. In the case that $F\hookrightarrow E\stackrel{p}{\to} B$ is a locally trivial bundle over $B$, Proposition~\ref{prop:upper-bound-secat-fibered} presents an upper bound to $\mathrm{secat}(q^{p})$. Example~\ref{complexHopf-trivial} studies the PBUP between complex Hopf fibration and trivial fibration. To explore the PBUP into the Fadell-Neuwirth fibration we firstly present Remark~\ref{Fadell-Neuwirth-fibration}. Then, in Proposition~\ref{to-fadell}, we explore the PBUP between any fibration and the Fadell-Neuwirth fibration. For instance, in the case of fibrations over the 3-sphere we obtain Example~\ref{s3}. Furthermore, Example~\ref{s2} presents the PBUP between Hopf fibration and the Fadell-Neuwirth fibration. 

\section{Sectional category revisited}\label{sn}
In this section we begin by recalling the notion of sectional category together with basic results about this numerical invariant. We shall follow the terminology in \cite{zapata2023}. If $f$ is homotopic to $g$ we shall denote by $f\simeq g$. The map $1_Z:Z\to Z$ denotes the identity map. 

\medskip Let $p:E\to B$ be a fibration.  A \textit{global cross-section} or \textit{global section} of $p$ is a right inverse of $p$, i.e., a map $s:B\to E$, such that $p\circ s = 1_B$ . Moreover, given a subspace $A\subset B$, a \textit{local section} of $p$ over $A$ is a map $s:A\to E$, such that $p\circ s$ is the inclusion $A\hookrightarrow B$.

\medskip We recall the following definition. 
\begin{definition}\label{defn:sectional-category}
   The \textit{sectional category} of $p$, called originally by Schwarz genus of $p$, see \cite[Definition 5, section 1 chapter III]{schwarz1966}, and denoted by $\mathrm{secat}\hspace{.1mm}(p),$ is the minimal cardinality of open covers of $B$, such that each element of the cover admits a local section to $p$. We set $\mathrm{secat}\hspace{.1mm}(p)=\infty$ if no such finite cover exists.
\end{definition}

This notion was first studied extensively by Schwarz for fibrations in \cite{schwarz1966}. Later by Berstein and Ganea for arbitrary maps \cite[Definition 2.1]{berstein1961}. For our purposes we use sectional category for fibrations as in Definition~\ref{defn:sectional-category}.

\medskip For a commutative ring $R$ and a proper ideal $S\subset R$, the \textit{nilpotency index} of $S$ is given by \[\text{nil}(S)=\min\{k:~\text{ any product of $k$ elements of $S$ is trivial}\}.\] Note that, $\text{nil}(S)$ coincides with $n+1$, where $n$ is the maximum number of factors in a nonzero product of elements from $S$.

\medskip The following statement gives a lower bound in terms of any multiplicative cohomology (see \cite[Proposiç\~{a}o 4.3.17-(3), p. 138]{zapata2022}).

\begin{lemma}\label{prop-sectional-category}
Let $h^\ast$ be a multiplicative cohomology theory and $p:E\to B$ be a fibration, then \[\mathrm{secat}\hspace{.1mm}(p)\geq \mathrm{nil}(\mathrm{Ker}(p^\ast)),\] where $p^\ast:h^\ast(B)\to h^\ast(E)$ is the induced homomorphism in cohomology.
\end{lemma}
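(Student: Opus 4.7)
The plan is to run the classical Schwarz argument for the contrapositive: starting from an optimal sectional cover of $B$, I would manufacture enough relative cohomology classes so that any product of $\mathrm{secat}(p)$ elements of $\mathrm{Ker}(p^\ast)$ is forced to vanish, which is exactly the inequality $\mathrm{nil}(\mathrm{Ker}(p^\ast))\leq \mathrm{secat}(p)$. We may assume $\mathrm{secat}(p)=k<\infty$, for otherwise the bound is trivial. Fix an open cover $U_1,\ldots,U_k$ of $B$ together with local sections $s_i:U_i\to E$ with $p\circ s_i=j_i$, where $j_i:U_i\hookrightarrow B$ is the inclusion.

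Now take arbitrary classes $\alpha_1,\ldots,\alpha_k\in\mathrm{Ker}(p^\ast)\subseteq h^\ast(B)$. Since $p^\ast\alpha_i=0$, naturality yields
\[j_i^\ast\alpha_i=s_i^\ast p^\ast\alpha_i=0\quad\text{in }h^\ast(U_i),\]
so exactness of the long cohomology sequence of the pair $(B,U_i)$ produces a relative class $\widetilde{\alpha}_i\in h^\ast(B,U_i)$ mapping to $\alpha_i$ under the restriction $h^\ast(B,U_i)\to h^\ast(B)$.

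Using the relative cup product afforded by the multiplicative structure of $h^\ast$, the classes $\widetilde{\alpha}_i$ can be multiplied in stages, landing at each step in the cohomology of the pair obtained by adjoining one more $U_i$ to the subspace. After $k$ steps we obtain
\[\widetilde{\alpha}_1\smile\cdots\smile\widetilde{\alpha}_k\in h^\ast\bigl(B,\,U_1\cup\cdots\cup U_k\bigr)=h^\ast(B,B)=0,\]
since the $U_i$ cover $B$. Naturality of the cup product under the pair map $(B,\varnothing)\hookrightarrow(B,U_1\cup\cdots\cup U_k)$ then gives $\alpha_1\smile\cdots\smile\alpha_k=0$ in $h^\ast(B)$. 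As the $\alpha_i\in\mathrm{Ker}(p^\ast)$ were arbitrary, this shows $\mathrm{nil}(\mathrm{Ker}(p^\ast))\leq k=\mathrm{secat}(p)$, which is the desired inequality.

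The main technical point that needs care is the lifting step $\alpha_i\mapsto\widetilde{\alpha}_i$: it requires the long exact sequence of the pair $(B,U_i)$ in the given multiplicative cohomology theory $h^\ast$, along with relative cup products compatible with that sequence. For ordinary cohomology, and more generally for any cohomology theory represented by a ring spectrum, this is standard and can be justified by replacing $(B,U_i)$ with the mapping cylinder of the inclusion to achieve a cofibration, or equivalently by passing to the reduced cohomology of the appropriate mapping cones. Rather than redevelop this formalism here, I would cite Schwarz \cite{schwarz1966} and reference \cite{zapata2022} for the precise statement in the generality used.
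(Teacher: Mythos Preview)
Your argument is correct: this is precisely the classical Schwarz cup-length argument. The paper does not supply its own proof of this lemma but simply cites \cite[Proposiç\~{a}o 4.3.17-(3)]{zapata2022}, and the proof recorded there is essentially the one you have written.
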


\begin{remark}
Lemma~\ref{prop-sectional-category} implies that if there exist cohomology classes $\alpha_1,\ldots,\alpha_k\in h^\ast(B)$ with $p^\ast(\alpha_1)=\cdots=p^\ast(\alpha_k)=0$ and $\alpha_1\cup\cdots\cup \alpha_k\neq 0$, then $\mathrm{secat}\hspace{.1mm}(p)\geq k+1$. 
\end{remark} 

\medskip\noindent Now, note that, if the following diagram

\begin{eqnarray*}
\xymatrix{ E^\prime \ar[rr]^{\,\,} \ar[dr]_{p^\prime} & & E \ar[dl]^{p}  \\
        &  B & }
\end{eqnarray*}
commutes up to homotopy, then $\mathrm{secat}\hspace{.1mm}(p^\prime)\geq \mathrm{secat}\hspace{.1mm}(p)$ (see \cite[Proposition 6, p. 70]{schwarz1966}). Also, from \cite[Proposition 7, g. 71]{schwarz1966}, for any fibration $p:E\to B$ and any map $f:B^\prime\to B$, note that any local section $s:U\to E$ of $p:E\to B$ induces a local section of the canonical pullback $f^\ast p:B^\prime\times_B E\to B^\prime$, called \textit{the local pullback section} $f^\ast(s):f^{-1}(U)\to B^\prime\times_B E$, simply by defining \[f^\ast(s)(b^\prime)=\left(b^\prime,\left(s\circ f\right)(b^\prime)\right).\]

\begin{eqnarray*}
\xymatrix{ &B^\prime\times_B E \ar[rr]^{ } \ar[d]_{f^\ast p} & & E \ar[d]^{p} & \\
       &B^\prime  \ar[rr]_{f} & &  B & \\
       f^{-1}(U)\ar@{^{(}->}[ru]_{}\ar[rr]_{f}\ar@{-->}@/^10pt/[ruu]^{f^\ast(s)} & &U\ar@{^{(}->}[ru]_{}\ar@{-->}@/^10pt/[ruu]^{s} & & }
\end{eqnarray*} Thus, \begin{align}\label{ineq-canonical}
    \text{secat}(f^\ast p)&\leq \text{secat}(p).
\end{align} 

From \cite[Definition 3.4, p. 41]{zapata2023} or \cite[p. 1619]{zapata2023higher} we recall the notion of quasi pullback. 

\begin{definition}
 A \textit{quasi pullback} means a strictly commutative diagram
\begin{eqnarray*}
\xymatrix{ \rule{3mm}{0mm}& X^\prime \ar[r]^{\varphi'} \ar[d]_{f^\prime} & X \ar[d]^{f} & \\ &
       Y^\prime  \ar[r]_{\,\,\varphi} &  Y &}
\end{eqnarray*} 
such that, for any strictly commutative diagram as the one on the left hand-side of~(\ref{diagramadoble}), there exists a (not necessarily unique) map $h:Z\to X^\prime$ that renders a strictly commutative diagram as the one on the right hand-side of~(\ref{diagramadoble}). 
\begin{eqnarray}\label{diagramadoble}
\xymatrix{
Z \ar@/_10pt/[dr]_{\alpha} \ar@/^30pt/[rr]^{\beta} & & X \ar[d]^{f}  & & &
Z\rule{-1mm}{0mm} \ar@/_10pt/[dr]_{\alpha} \ar@/^30pt/[rr]^{\beta}\ar[r]^{h} & 
X^\prime \ar[r]^{\varphi'} \ar[d]_{f^\prime} & X \\
& Y^\prime  \ar[r]_{\,\,\varphi} &  Y & & & & Y^\prime &  \rule{3mm}{0mm}}
\end{eqnarray}   
\end{definition}

Note that such a condition amounts to saying that $X'$ contains the canonical pullback $Y'\times_Y X$ determined by $f$ and $\varphi$ as a retract in a way that is compatible with the mappings into $X$ and $Y'$. There are many examples  of quasi pullbacks which are not pullbacks. For instance, for any (non-unitary) space $X$, the following diagram 
\begin{eqnarray*}
\xymatrix{ \rule{3mm}{0mm}& X \ar[r]^{} \ar[d]_{} & \ast \ar[d]^{} & \\ &
       \ast  \ar[r]_{} &  \ast &}
\end{eqnarray*} is a quasi pullback however it is not a pullback. 

\medskip Next, we recall the notion of L-S category which, in our setting, is one bigger than the one given in \cite[Definition 1.1, p. 1]{cornea2003lusternik}. 

\begin{definition}
The \textit{Lusternik-Schnirelmann category} (L-S category) or category of a space $X$, denoted by cat$(X)$, is the least integer $m$ such that $X$ can be covered by $m$ open sets, all of which are contractible within $X$. We set $\text{cat}(X)=\infty$ if no such $m$ exists.
\end{definition}

We have $\text{cat}(X)=1$ iff $X$ is contractible. The L-S category is a homotopy invariant, i.e., if $X$ is homotopy equivalent to $Y$ (which we shall denote by $X\simeq Y$), then $\text{cat}(X)=\text{cat}(Y)$. 

\medskip Let $\dim X$ be the \textit{covering dimension} of a paracompact space $X$, i.e., $\dim X\leq n$ if and only if any open cover of $X$ has a locally finite open refinement such that no point of $X$ belongs to more than $n+1$ open sets of the refinement. The covering dimension of a manifold is the same as the dimension of the manifold. A space $X$ is called \textit{locally contractible} if any point of $X$ has an open neighborhood $U\subset X$ such that the inclusion $U\hookrightarrow X$ is null-homotopic \cite[p. 215]{farber2003}. 

\medskip The invariant $\mathrm{cat}(-)$ satisfies the following properties.

\begin{lemma}\label{cat-stimates}
\noindent
\begin{enumerate}
    \item \cite[Proposition 5.1, p. 336]{james1978}, \cite[p. 215]{farber2003} If $X$ is a path-connected paracompact locally contractible space, then \[\mathrm{cat}(X)\leq \mathrm{hdim}(X)+1,\]  where $\mathrm{hdim}(X)$ denotes the homotopical dimension of $X$, i.e., the minimal covering dimension of paracompact spaces having the homotopy type of $X$.
    
    \item \cite[Proposiç\~{a}o 4.1.34, p. 108]{zapata2022} We have $$\mathrm{cat}(X)\geq \mathrm{nil}\left(\widetilde{h}^\ast(X)\right),$$ where $\widetilde{h}^\ast(X)$ is any multiplicative reduced cohomology theory.
\end{enumerate}
\end{lemma}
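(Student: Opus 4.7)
Since the statement bundles two classical inequalities, the natural plan is to treat them independently.

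For part (1), my strategy is to first exploit the homotopy invariance of $\mathrm{cat}$ noted right after the definition of L-S category, replacing $X$ by a paracompact model $Y\simeq X$ with $\dim Y\leq n:=\mathrm{hdim}(X)$; this reduces the problem to bounding $\mathrm{cat}(Y)$ by $n+1$. The key topological input is then the standard fact that a paracompact space of covering dimension at most $n$ admits, for any open cover, a locally finite refinement of order at most $n+1$, i.e.\ no point lies in more than $n+1$ members. Using local contractibility of $X$ (pulled back through the homotopy equivalence, or applied directly to $X$), I would start from a cover by open sets each contractible in $X$ and refine it this way. Then I would color the refinement with $n+1$ colors so that sets of the same color are pairwise disjoint, producing $n+1$ open sets $V_1,\dots,V_{n+1}$, each a disjoint union of opens contractible in $X$.

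The main obstacle in (1) is that a disjoint union of sets contractible in $X$ is not automatically contractible in $X$; one must merge the individual contractions into a single global homotopy. This is where path-connectedness of $X$ enters: fix a basepoint $x_0\in X$, and for each component $W$ of $V_i$ with contraction $H_W:W\times I\to X$ ending at some point $w_0$, concatenate $H_W$ with the constant-on-$W$ homotopy tracing a fixed path in $X$ from $w_0$ to $x_0$. Since the $V_i$-components are disjoint open sets and each concatenated homotopy is continuous on its component, the union is continuous on $V_i\times I$, producing a single null-homotopy of $V_i\hookrightarrow X$. This yields a categorical cover of size $n+1$, hence $\mathrm{cat}(X)\leq n+1$.

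For part (2), I would run the classical cup-length argument, which adapts verbatim to any multiplicative reduced cohomology theory $\widetilde h^\ast$. Set $m=\mathrm{cat}(X)$ and pick a categorical cover $X=U_1\cup\cdots\cup U_m$. For each $i$, since $U_i\hookrightarrow X$ is null-homotopic, the restriction $\widetilde h^\ast(X)\to \widetilde h^\ast(U_i)$ is the zero map, so every $\alpha_i\in\widetilde h^\ast(X)$ lifts through the connecting map of the long exact sequence of the pair $(X,U_i)$ to a relative class $\bar\alpha_i\in h^\ast(X,U_i)$. The relative cup product then produces
\[
\bar\alpha_1\cup\cdots\cup\bar\alpha_m\in h^\ast(X,\,U_1\cup\cdots\cup U_m)=h^\ast(X,X)=0,
\]
and naturality of the cup product with respect to the projection to absolute classes forces $\alpha_1\cup\cdots\cup\alpha_m=0$ in $\widetilde h^\ast(X)$. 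Hence every $m$-fold product of reduced classes vanishes, giving $\mathrm{nil}\bigl(\widetilde h^\ast(X)\bigr)\leq m=\mathrm{cat}(X)$.

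The only nontrivial step is the gluing of the per-component contractions in (1); the dimension-theoretic refinement and the coloring are standard, and the cup-length computation in (2) is routine once one has the relative lifts, which exist for any multiplicative cohomology theory satisfying the usual Eilenberg--Steenrod axioms (with the pair long exact sequence).
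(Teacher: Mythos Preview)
The paper does not actually prove this lemma; it merely records the two inequalities with citations to James, Farber, and the first author's thesis. So there is no ``paper's approach'' to compare against---you have supplied the standard arguments that underlie those references, and both parts of your outline are correct.

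One small point of precision in part~(1): the refinement-and-coloring step must be carried out on the low-dimensional model $Y$, not on $X$ itself, since only $Y$ is known to satisfy $\dim Y\le n$. Your parenthetical ``pulled back through the homotopy equivalence'' is the right move: cover $X$ by open sets contractible in $X$ (using local contractibility of $X$), pull these back along a homotopy equivalence $f\colon Y\to X$, observe that each $f^{-1}(W_\alpha)\hookrightarrow Y$ is null-homotopic because $g\circ f\simeq 1_Y$, and then refine and color on $Y$. The alternative you mention, ``applied directly to $X$'', would only yield $\mathrm{cat}(X)\le\dim X+1$, which is weaker. With that reading your argument is complete; part~(2) is the classical relative-cup-length computation and needs no adjustment.
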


\medskip Furthermore, we recall the following statements. 

 \begin{lemma}\label{prop-secat-map}
 Let $p:E\to B$ be a fibration.
 \begin{enumerate}
 \item \cite[Lemma 3.5]{zapata2023} If the following square
\begin{eqnarray*}
\xymatrix{ E^\prime \ar[r]^{\,\,} \ar[d]_{p^\prime} & E \ar[d]^{p} & \\
       B^\prime  \ar[r]_{} &  B &}
\end{eqnarray*}
is a quasi pullback, then $\mathrm{secat}\hspace{.1mm}(p^\prime)\leq \mathrm{secat}\hspace{.1mm}(p)$. 
\item \cite[Proposition 3.6]{zapata2023} For any space $Z$, we have \[\mathrm{secat}(p\times 1_Z)=\mathrm{secat}(p).\]  
     \item \cite[Theorem 18, p. 108]{schwarz1966} We have $\mathrm{secat}\hspace{.1mm}(p)\leq \mathrm{cat}(B)$. 
 \end{enumerate}
\end{lemma}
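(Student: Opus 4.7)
The three parts are essentially independent, but parts (2) and (3) become short once (1) is established, so I would prove them in that order.

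\emph{Part (1).} Let $\{U_i\}_{i=1}^{n}$ be an open cover of $B$ with local sections $s_i\colon U_i\to E$ of $p$, where $n=\mathrm{secat}(p)$. Set $V_i=\varphi^{-1}(U_i)$; these form an open cover of $B'$. For each $i$ the square
\begin{eqnarray*}
\xymatrix{
V_i \ar[r]^{s_i\circ\varphi} \ar[d] & E \ar[d]^{p}\\
B' \ar[r]_{\varphi} & B
}
\end{eqnarray*}
strictly commutes (with the left vertical being the inclusion) because $p\circ s_i$ is the inclusion $U_i\hookrightarrow B$. Applying the quasi-pullback hypothesis with $(Z,\alpha,\beta)=(V_i,\mathrm{incl},s_i\circ\varphi)$ produces a map $h_i\colon V_i\to E'$ such that $p'\circ h_i$ equals the inclusion $V_i\hookrightarrow B'$. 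Thus each $h_i$ is a local section of $p'$ over $V_i$, so $\mathrm{secat}(p')\leq n$.

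\emph{Parts (2) and (3).} For (2), the square with horizontal projections $E\times Z\to E$ and $B\times Z\to B$ and verticals $p\times 1_Z$ and $p$ is a genuine pullback, so (1) yields $\mathrm{secat}(p\times 1_Z)\leq \mathrm{secat}(p)$. For the reverse inequality, fix $z_0\in Z$ and use the pullback along the embeddings $b\mapsto (b,z_0)$ and $e\mapsto (e,z_0)$; another application of (1) gives $\mathrm{secat}(p)\leq \mathrm{secat}(p\times 1_Z)$. For (3), let $U_1,\ldots,U_m$ witness $\mathrm{cat}(B)=m$, so each inclusion $\iota_i\colon U_i\hookrightarrow B$ is null-homotopic via some homotopy $H_i$ ending at the constant map with value $b_i\in B$. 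Choose $e_i\in p^{-1}(b_i)$; the constant map $U_i\to E$ at $e_i$ covers $H_i(\,\cdot\,,1)$, and the homotopy lifting property of $p$ applied to $H_i$ run in reverse produces a lift whose other endpoint $s_i\colon U_i\to E$ satisfies $p\circ s_i=\iota_i$. Hence $\mathrm{secat}(p)\leq m$.

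The only delicate point lies in (1): one must verify that the square fed into the quasi-pullback condition commutes on the nose, not merely up to homotopy. This is automatic here because $p\circ s_i$ is literally the inclusion, so the quasi-pullback factorization directly applies. Everything else is a formal consequence of (1) together with the homotopy lifting property of the fibration $p$.
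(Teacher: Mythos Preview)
The paper does not give its own proof of this lemma; it merely records the three assertions with citations to \cite{zapata2023} and \cite{schwarz1966}. Your arguments are correct and standard: part~(1) is precisely the intended use of the quasi-pullback factorisation (and extends the ``local pullback section'' construction the paper spells out just before the lemma for honest pullbacks), while parts~(2) and~(3) follow cleanly from it together with the homotopy lifting property, with only the harmless implicit assumptions that $Z\neq\varnothing$ in~(2) and that each fibre $p^{-1}(b_i)$ is nonempty in~(3).
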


We recall that we work with sectional category for fibrations, the following comment is relevant for our study.

\begin{remark}\label{trivial-crosssection} Given a Hausdorff space $X$ admitting a fixed-point free involution $\tau$, we have that:
\begin{itemize}
    \item[i)] The quotient map $q_{(X,\tau)}:=q\colon X\to X/\tau$ is a fibration \cite[Theorem 3.2.2, pg. 66]{tom2008}. Therefore, it is possible to speak of its sectional category.
    \item[ii)] The quotient map $q:X\to X/\tau$ is a fiber bundle \cite{aguilar2002}, and it is the trivial bundle if and only if it admits a global cross-section \cite[p. 36]{steenrod1951}. In particular, in the case that $X$ is path-connected, the quotient map $q:X\to X/\tau$ is not the trivial bundle and does not admit a global cross-section. 
\end{itemize}
\end{remark}

Note that, the existence of a free action of $\mathbb{Z}_2$ on $X$ is equivalent to the existence of a fixed-point free involution $\tau:X\to X$.

\begin{example}\label{projective}
\noindent
\begin{enumerate}
    \item \cite[Example 3.11]{zapata2023} For any $m\geq 1$, we have \[\mathrm{secat}(q:S^m\to \mathbb{R}P^m)=\mathrm{cat}(\mathbb{R}P^m)=\mathrm{nil}\left(\text{Ker}(q^\ast_{\mathbb{Z}_2})\right)=m+1,\] where $q^\ast_{\mathbb{Z}_2}:H^\ast\left(\mathbb{R}P^m;\mathbb{Z}_2\right)\to H^\ast\left(S^m;\mathbb{Z}_2\right)$ is the induced homomorphism in $\mathbb{Z}_2$-cohomology.
    \item \cite[Lemma 3.12]{zapata2023}, \cite{roth2008} We have that $\mathrm{secat}\left(q^{\mathbb{R}^n}:F(\mathbb{R}^n,2)\to D(\mathbb{R}^n,2)\right)=n$ for any $n\geq 1$. 
    \item \cite[Lemma 3.13]{zapata2023} We have that $\mathrm{secat}\left(q^{S^n}:F(S^n, 2)\to D(S^n,2)\right)=n+1$ for any $n\geq 1$.
\end{enumerate}
\end{example}

From \cite{tohl1976}, we say that an involution $\tau$ on $X$ is \textit{equivalent} to an involution $\tau'$ on $X'$, denoted by $(X,\tau)\equiv(X',\tau')$, if there exists a homeomorphism $h:X\to X'$ such that $h\circ \tau=\tau'\circ h$. For our purpose, we present the following definition.  

\begin{definition}\label{comparable}
 We say that that an involution $\tau$ on $X$ is \textit{comparable} to an involution $\tau'$ on $X'$, denoted by $(X,\tau)\sim(X',\tau')$, if and only if there are $\mathbb{Z}_2$-equivariant maps $\varphi:(X,\tau)\to (X',\tau')$ and $\psi:(X',\tau')\to (X,\tau$). 
\end{definition}

The relation \aspas{$\equiv$} is an equivalence relation. In addition, $\sim$ is also an equivalence relation. Note that, if $(X,\tau)\equiv(X',\tau')$ then $(X,\tau)\sim(X',\tau')$. The converse does not hold. For instance, the antipodal involution on the $m$-dimensional sphere $S^m$ is comparable with the natural involution, $(x,y)\mapsto (y,x)$, on $F(S^m,2)$; and it is not equivalent. The given example above  has the property that the two spaces have the same homotopy type. So one  can  ask:  Does  there exist examples  where they are comparable but do not have the same homotopy type? The answer  is yes. Take  the spaces $X\times  F$ and  $F$. If  $\tau$ is a free involution  on  $F$, consider the free involution $1_X\times \tau$  on    $X\times  F$. The projection  $X\times F  \to  F$ and  the injective  map  $F\to  X\times  F$ given  by  $x\mapsto (x_0, x)$ for  an  arbitrary 
 point  $x_0\in  X$,   shows that they are comparable. But  they  do not  have the  same homotopy  type  for many  spaces  $F$.  This  in particular shows  that     if 
we define the relation \aspas{equivalent} replacing homeomorphism  by homotopy equivalence, the two concepts \aspas{comparable}  and  \aspas{equivalent}  are  not the  same.  

\medskip We will use the following statement. It follows from \cite[p. 61]{schwarz1966}.

\begin{remark}\label{pullback} Suppose that $X$ and $X'$ are Hausdorff spaces and $\tau:X\to X$, $\tau':X'\to X'$ are fixed-point free involutions. Then, any commutative diagram of the form  \begin{eqnarray*}
\xymatrix@C=2cm{ 
       X  \ar[r]^{\varphi}\ar[d]_{q} &  X'\ar[d]^{q'} &\\
      X/\tau \ar[r]_{\overline{\varphi} } & X'/\tau' &
       }
\end{eqnarray*} where $\varphi:X\to X'$ is a $\mathbb{Z}_2$-equivariant map and $\overline{\varphi}$ is induced by $\varphi$ in the quotient spaces, is a pullback since $\varphi$ restricts to a homeomorphism on each fiber (in this case both $q:X\to X/\tau$ and $q':X'\to X'/\tau'$ are $2$-sheeted covering maps).
\end{remark}

Then, we have:

\begin{lemma}\label{comparable-invariant}
    Suppose that $X, X'$ are Hausdorff spaces and $\tau\colon X\to X,\tau'\colon X'\to X'$ are fixed-point free involutions. If $(X,\tau)\sim(X',\tau')$, then \[\mathrm{secat}\left(q_{(X,\tau)}\colon X\to X/\tau\right)=\mathrm{secat}\left(q_{(X',\tau')}\colon X'\to X'/\tau'\right).\]
\end{lemma}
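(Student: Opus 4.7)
The plan is to use the two $\mathbb{Z}_2$-equivariant maps supplied by comparability to produce pullback squares involving the two quotient maps, and then invoke Lemma~\ref{prop-secat-map}(1) in each direction to obtain the two inequalities that together yield equality.

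First, by Definition~\ref{comparable} there exist $\mathbb{Z}_2$-equivariant maps $\varphi\colon(X,\tau)\to(X',\tau')$ and $\psi\colon(X',\tau')\to(X,\tau)$. Each of these descends to a well-defined map on orbit spaces, producing commutative squares
\[
\xymatrix@C=2cm{
X \ar[r]^{\varphi}\ar[d]_{q_{(X,\tau)}} & X'\ar[d]^{q_{(X',\tau')}} \\
X/\tau \ar[r]_{\overline{\varphi}} & X'/\tau'
}
\qquad\text{and}\qquad
\xymatrix@C=2cm{
X' \ar[r]^{\psi}\ar[d]_{q_{(X',\tau')}} & X\ar[d]^{q_{(X,\tau)}} \\
X'/\tau' \ar[r]_{\overline{\psi}} & X/\tau.
}
\]
Because $X$ and $X'$ are Hausdorff and $\tau,\tau'$ are fixed-point free, Remark~\ref{pullback} applies to each of these squares and tells us that both are honest pullbacks.

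Second, every pullback is in particular a quasi pullback, so Lemma~\ref{prop-secat-map}(1) applies. Applied to the square coming from $\varphi$ it yields $\mathrm{secat}\left(q_{(X,\tau)}\right)\leq \mathrm{secat}\left(q_{(X',\tau')}\right)$, and applied to the square coming from $\psi$ it yields the reverse inequality $\mathrm{secat}\left(q_{(X',\tau')}\right)\leq \mathrm{secat}\left(q_{(X,\tau)}\right)$. Combining the two inequalities gives the desired equality.

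There is essentially no substantive obstacle: the whole argument is an immediate assembly of results already proved earlier in the paper. The only point deserving a moment of care is confirming that the hypotheses of Remark~\ref{pullback} (Hausdorffness of the total spaces and freeness of the involutions) are exactly what is assumed in the statement of the lemma, and that the commutativity of the two squares follows from the equivariance of $\varphi$ and $\psi$ together with the universal property of the quotient. The comparability relation has been designed precisely so as to produce two pullback squares that trigger Lemma~\ref{prop-secat-map}(1) in both directions, making the proof essentially formal.
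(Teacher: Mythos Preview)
Your proof is correct and follows essentially the same approach as the paper: use the two equivariant maps from Definition~\ref{comparable} to obtain two commutative squares, apply Remark~\ref{pullback} to see they are pullbacks, and invoke Lemma~\ref{prop-secat-map}(1) in each direction to get both inequalities. The paper's proof is slightly terser but otherwise identical in structure and content.
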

\begin{proof}
 By Definition~\ref{comparable}, there are $\mathbb{Z}_2$-equivariant maps $\varphi:(X,\tau)\to (X',\tau')$ and $\psi:(X',\tau')\to (X,\tau')$. Then, by Remark~\ref{pullback}, we have the following pullbacks:
 \begin{eqnarray*}
\xymatrix@C=2cm{ 
       X  \ar[r]^{\varphi}\ar[d]_{q_{(X,\tau)}} &  X'\ar[d]^{q_{(X',\tau')}} &\\
      X/\tau \ar[r]_{\overline{\varphi} } & X'/\tau' &
       } & \xymatrix@C=2cm{ 
       X'  \ar[r]^{\psi}\ar[d]_{q_{(X',\tau')}} &  X\ar[d]^{q_{(X,\tau)}} &\\
      X'/\tau' \ar[r]_{\overline{\psi} } & X/\tau &
       }
\end{eqnarray*} Then, by Item 1) from Lemma~\ref{prop-secat-map}, we conclude that $\mathrm{secat}\left(q_{(X,\tau)}\right)=\mathrm{secat}\left(q_{(X',\tau')}\right).$
\end{proof}

From Lemma~\ref{comparable-invariant} follows that if $(X,\tau)\equiv(X',\tau')$ then $\mathrm{secat}\left(q_{(X,\tau)}\right)=\mathrm{secat}\left(q_{(X',\tau')}\right)$.

\section{Parametrized Borsuk-Ulam theory via sectional category}\label{bu}
In this Section we present the notion of parametrized Borsuk-Ulam property (Definition~\ref{defn:pbup}) together with basic results about this property via sectional category. Our main result is Theorem \ref{theorem-principal},  which is proved at the end of the section. 

\medskip Let $\left((p,\tau);p'\right)$ be a triple where $p:E\to B$, $p':E'\to B$ are fibrations over $B$ and $\tau:E\to E$ is a free involution with $p\circ\tau=p$.
\begin{eqnarray*}
\xymatrix{ 
       E  \ar[rr]^{\tau}\ar[rd]_{p} & & E\ar[ld]^{p} \\
      &B & 
       }
\end{eqnarray*}

Motivated by \cite{dold1988}, we present the following definition.

\begin{definition}\label{defn:pbup}
 We say that $\left((p,\tau);p'\right)$ satisfies the \textit{parametrized Borsuk-Ulam property} (which we shall routinely abbreviate to PBUP) if  for every map $f:E\to E'$ with $p'\circ f=p$
\begin{eqnarray*}
\xymatrix{ 
       E  \ar[rr]^{f}\ar[rd]_{p} & & E'\ar[ld]^{p'} \\ 
      &B & 
       }
\end{eqnarray*}
there exists a point $x\in E$ such that $f(\tau(x))=f(x)$.   
\end{definition}

\begin{remark}
Note that, if there is a map $f:E\to E'$ with $p'\circ f=p$ then \[\mathrm{secat}(p)\geq \mathrm{secat}(p').\] For this reason, to study the PBUP we will suppose that the inequality $\mathrm{secat}(p)\geq \mathrm{secat}(p')$ always holds. Otherwise, the PBUP does not make sense. In addition, note that if $\mathrm{secat}(p')=1$ then there is a map $f:E\to E'$ with $p'\circ f=p$.
\end{remark}

We will fix some notation that will be used throughout the paper. From~\cite[Definition 2.7, p. 187]{cohen2010}, the \textit{$2$-ordered fibre-wise configuration space} of $p':E'\to B$ is the topological subspace \[F_{p'}(E',2)=\{(y_1,y_2)\in F(E',2):~p'(y_1)=p'(y_2)\}\] where $F(E',2)=\{(y_1,y_2)\in E'\times E':~y_1\neq y_2\}$ is the ordered configuration space of $2$ distinct points on $E'$ topologised as a subspace of the Cartesian power $E'\times E'$ \cite{fadell1962configuration}. We have the diagram

\[
\xymatrix@C=3cm{ F_{p'}(E',2) \ar@{^{(}->}[r]^{} \ar@{^{(}->}[d]_{} & F(E',2)\ar@{^{(}->}[d]_{} & \\
       E'\times_B E'  \ar@{^{(}->}[r]_{ } &  E'\times E' &}
\] where $E'\times_B E'=\{(y_1,y_2)\in E'\times E':~p'(y_1)=p'(y_2)\}$ is the fibered product of $p'$ and note that $F_{p'}(E',2)=\left(E'\times_B E'\right)\cap  F(E',2)$. In addition, $E'\times_B E'$ fiber over $B$ with fiber $F'\times F'$. Furthermore, $F_{p'}(E',2)$ fiber over $B$ with fiber $F(F',2)$ whenever $p':E'\to B$ is a locally trivial bundle and $B$ is paracompact (see \cite[Lemma 11]{goncalves2023}):
\[
\xymatrix@C=2cm{ F(F',2) \ar@{^{(}->}[rr]^{} \ar@{^{(}->}[d]_{} & & F'\times F'\ar@{^{(}->}[d]_{} & \\
       F_{p'}(E',2)  \ar@{^{(}->}[rr]_{ } \ar[dr]_{p} & & E'\times_B E'\ar[dl]^{p} &\\
        & B & &}
\]

Consider the double cover $q^{p'}:F_{p'}(E',2)\to F_{p'}(E',2)/\mathbb{Z}_2$ from the ordered fiber-wise configuration space $F_{p'}(E',2)$ to its unordered quotient $F_{p'}(E',2)/\mathbb{Z}_2$ given by the obvious free action $\tau_2$ of the symmetric group $\Sigma_2$ on $2$ letters, i.e., $\tau_2(y_1,y_2)=(y_2,y_1)$. 

\medskip From~\cite{zapata2023} for the triple $\left((X,\tau);Y\right)$, where $X,Y$ are Hausdorff spaces and $\tau:X\to X$ is a fixed-point free involution,  we say that $\left((X,\tau);Y\right)$ satisfies the \textit{Borsuk-Ulam property} (which we shall routinely abbreviate to BUP) if  for every map $f:X\to Y$ there exists a point $x\in X$ such that $f(\tau(x))=f(x)$. 

\medskip For Hausdorff spaces $X,Y$ and a fixed-point free involution $\tau:X\to X$, in \cite{zapata2023}, the authors discover a connection between the sectional category of the double covers $q:X\to X/\tau$ and $q^Y:F(Y,2)\to D(Y,2)$ from the ordered configuration space $F(Y,2)$ to its unordered quotient $D(Y,2)=F(Y,2)/\Sigma_2$, and the Borsuk-Ulam property (BUP) for the triple $\left((X,\tau);Y\right)$. The paracompactness hypothesis in \cite[Theorem 3.14]{zapata2023} was used to have that the quotient map $ X\to X/\tau$ is a fibration and hence it is possible to speak of its sectional category and we can use Item (1) from Lemma~\ref{prop-secat-map}. From Item (i) of Remark~\ref{trivial-crosssection}, the paracompactness hypothesis in \cite[Theorem 3.14]{zapata2023} is not necessary. So we can state the following statement without the hypotheses of paracompactnes.

\begin{lemma}\cite[Theorem 3.14]{zapata2023}\label{bup-secat}
Suppose that $X$ and $Y$ are Hausdorff spaces and $\tau:X\to X$ is a fixed-point free involution. If the triple $\left((X,\tau);Y\right)$ does not satisfy the BUP then \[\mathrm{secat}(q)\leq \mathrm{secat}(q^Y).\] Equivalently, if $\mathrm{secat}(q)> \mathrm{secat}(q^Y)$ then the triple $((X,\tau);Y)$ satisfies the BUP.
\end{lemma}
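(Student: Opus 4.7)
The plan is to exploit the failure of the BUP to construct an explicit $\mathbb{Z}_2$-equivariant map $X\to F(Y,2)$, and then appeal to Remark~\ref{pullback} and Lemma~\ref{prop-secat-map}(1) to transfer the sectional category inequality. Concretely, suppose $\left((X,\tau);Y\right)$ does not satisfy the BUP. Then there exists a map $f:X\to Y$ such that $f(\tau(x))\neq f(x)$ for every $x\in X$.

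Given such an $f$, I would define
\[
\widetilde{f}\colon X\to F(Y,2),\qquad \widetilde{f}(x)=\bigl(f(x),\,f(\tau(x))\bigr).
\]
The map is well defined (the two coordinates differ by hypothesis) and continuous as a pair of continuous maps. Moreover, it is $\mathbb{Z}_2$-equivariant with respect to $\tau$ on the source and the coordinate swap $\tau_2$ on $F(Y,2)$, since
\[
\widetilde{f}(\tau(x))=\bigl(f(\tau(x)),\,f(\tau^2(x))\bigr)=\bigl(f(\tau(x)),\,f(x)\bigr)=\tau_2\bigl(\widetilde{f}(x)\bigr),
\]
using $\tau^2=1_X$.

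Next, $\widetilde{f}$ descends to a continuous map $\overline{\widetilde{f}}\colon X/\tau\to D(Y,2)$ between the orbit spaces, yielding a strictly commutative square. Because $X$ and $Y$ are Hausdorff and both involutions are fixed-point free, Remark~\ref{pullback} applies and tells us that this square is in fact a pullback (the equivariant map $\widetilde{f}$ restricts to a bijection, hence homeomorphism, on each two-point fiber of the covering $q$). In particular it is a quasi pullback.

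Finally, I would invoke Item (1) of Lemma~\ref{prop-secat-map}, which gives
\[
\mathrm{secat}(q)\leq \mathrm{secat}(q^Y).
\]
The contrapositive is the claimed Borsuk--Ulam conclusion. I do not expect any real obstruction: the only delicate point is checking that $\widetilde{f}$ truly lands in the \emph{ordered} configuration space, which is exactly guaranteed by the failure of the BUP, and that the resulting square is a (quasi) pullback, which is precisely the content of Remark~\ref{pullback}. Note that, as stressed in the paragraph preceding the lemma, Remark~\ref{trivial-crosssection}(i) ensures $q$ is a fibration, so its sectional category is well defined without any paracompactness hypothesis.
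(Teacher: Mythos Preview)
Your argument is correct and follows essentially the same approach indicated in the paper: the discussion preceding the lemma (and the parallel proof of Theorem~\ref{theorem-principal}) makes clear that the intended proof is to produce a $\mathbb{Z}_2$-equivariant map $X\to F(Y,2)$ from the failure of the BUP, invoke Remark~\ref{pullback} to get a pullback square, and then apply Lemma~\ref{prop-secat-map}(1). Your explicit formula $\widetilde{f}(x)=(f(x),f(\tau(x)))$ and the verification of equivariance are exactly what is needed, and your closing remark about Remark~\ref{trivial-crosssection}(i) replacing paracompactness matches the paper's own commentary.
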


On the other hand, we present the following natural remarks.

\begin{remark}\label{basic-remark}
Let $p:E\to B$, $p':E'\to B$ be fibrations over $B$ and $\tau:E\to E$ is a fixed-point free involution with $p\circ\tau=p$.
\begin{enumerate}
    \item[1)] Fix $b\in B$, and consider the fiber $F_b=p^{-1}(b)$ over $b$. The commutativity of the triangle   \begin{eqnarray*}
\xymatrix{ 
       E  \ar[rr]^{\tau}\ar[rd]_{p} & & E\ar[ld]^{p} \\
      &B & 
       }
\end{eqnarray*} implies $\tau(F_b)=F_b$ and thus $\tau$ restricts to a fixed-point free involution on $F_b$.

\item[2)] Let $F$ be the fiber of $p$ and $F'$ be the fiber of $p'$ over a base point $b_0\in B$. Any map $f:E\to E'$ with $p'\circ f=p$ satisfy $f(F)\subset F'$. Thus, $f$ restricts to $f_{\mid}:F\to F'$.

\item[3)] Item 1) and 2) show that if the triple $\left((F,\tau);F'\right)$ satisfies the BUP then the triple $\left((p,\tau);p'\right)$ satisfies the PBUP. Equivalently, if the triple $\left((p,\tau);p'\right)$ does not satisfy the PBUP then $\left((F,\tau);F'\right)$ does not satisfy the BUP (and thus, by Lemma~\ref{bup-secat}), $\mathrm{secat}(q:F\to F/\tau)\leq\mathrm{secat}(q^{F'}:F(F',2)\to D(F',2))$. Therefore, if $\mathrm{secat}(q:F\to F/\tau)>\mathrm{secat}(q^{F'}:F(F',2)\to D(F',2))$ then the triple $\left((F,\tau);F'\right)$ satisfies the BUP and $\left((p,\tau);p'\right)$ satisfies the PBUP.

 \begin{eqnarray*}
\xymatrix{ 
F\ar@{^{(}->}[d]_{ }\ar[r]^{\tau}&F\ar@{^{(}->}[d]_{ } & & F'\ar@{^{(}->}[d]_{ }\\
   E\ar[r]^{\tau}\ar[rrdd]_{p}  &  E  \ar[rdd]_{p} & & E'\ar[ldd]^{p'} \\
   & & & \\
    &  &B & 
       }
\end{eqnarray*}
\end{enumerate}
\end{remark}   

From~Example~\ref{projective}, recall that $\mathrm{secat}(q:S^n\to \mathbb{R}P^n)=n+1$ and $\mathrm{secat}(q^{\mathbb{R}^n}:F(\mathbb{R}^n,2)\to D(\mathbb{R}^n,2))=n$ for any $n\geq 1$. Then we have the following statement.

\begin{proposition}\label{borsuk-and-parametrized}
Let $p:E\to B$ be a fibration with fiber $S^{m-1}$, $p':E'\to B$ be a fibration with fiber $\mathbb{R}^n$ and $\tau:E\to E$ is a fixed-point free involution with $p\circ\tau=p$ and the restriction $\tau:S^{m-1}\to S^{m-1}$ is comparable with the antipodal involution. If $n<m$ then the triple $\left((S^{m-1},\tau);\mathbb{R}^n\right)$ satisfies the BUP and also $\left((p,\tau);p'\right)$ satisfies the PBUP.
\end{proposition}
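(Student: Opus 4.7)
The plan is to reduce the statement to a direct application of Lemma~\ref{bup-secat} together with Remark~\ref{basic-remark}(3), after computing the two sectional categories involved via the tools already developed in Section~\ref{sn}.

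First, I would handle the BUP half for the fiber pair $((S^{m-1},\tau);\mathbb{R}^n)$. By hypothesis, the restriction of $\tau$ to the fiber $S^{m-1}$ is comparable with the antipodal involution $\tau_a$ on $S^{m-1}$. Applying Lemma~\ref{comparable-invariant} gives
\[
\mathrm{secat}\!\left(q_{(S^{m-1},\tau)}\colon S^{m-1}\to S^{m-1}/\tau\right)
=\mathrm{secat}\!\left(q_{(S^{m-1},\tau_a)}\colon S^{m-1}\to \mathbb{R}P^{m-1}\right),
\]
and the right-hand side equals $m$ by Example~\ref{projective}(1). On the other hand, Example~\ref{projective}(2) gives $\mathrm{secat}(q^{\mathbb{R}^n}\colon F(\mathbb{R}^n,2)\to D(\mathbb{R}^n,2))=n$. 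The hypothesis $n<m$ then yields
\[
\mathrm{secat}\!\left(q_{(S^{m-1},\tau)}\right)=m>n=\mathrm{secat}\!\left(q^{\mathbb{R}^n}\right),
\]
so Lemma~\ref{bup-secat} implies that the triple $((S^{m-1},\tau);\mathbb{R}^n)$ satisfies the BUP.

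Second, to pass from the BUP at the fiber level to the PBUP for $((p,\tau);p')$, I would invoke Remark~\ref{basic-remark}(3): by item (1) of that remark, the free involution $\tau$ on $E$ restricts to a fixed-point free involution on the fiber $F=S^{m-1}$, and any fiber-preserving map $f:E\to E'$ restricts to a map $F\to F'=\mathbb{R}^n$ by item (2). Since the fiber-level triple $((S^{m-1},\tau);\mathbb{R}^n)$ satisfies the BUP, this restriction produces a coincidence point in $F$, which of course lies in $E$. Thus $((p,\tau);p')$ satisfies the PBUP.

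The proof is essentially an assembly of previously established facts; the only step that requires care is recognizing that the comparability hypothesis is precisely what allows one to import the known value $\mathrm{secat}(S^{m-1}\to \mathbb{R}P^{m-1})=m$ for the (potentially non-antipodal) involution $\tau$. After that, the numerical comparison $m>n$ and the two-step mechanism ``BUP on fibers $\Rightarrow$ PBUP'' of Remark~\ref{basic-remark}(3) finish the argument without further obstacle.
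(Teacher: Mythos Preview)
Your proposal is correct and follows essentially the same route as the paper's proof, which simply cites Lemma~\ref{bup-secat}, Remark~\ref{basic-remark}, and Lemma~\ref{comparable-invariant}; you have merely unpacked the computations from Example~\ref{projective} that the paper recalls immediately before the proposition.
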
 
\begin{proof}
It follows from Lemma~\ref{bup-secat} and Remark~\ref{basic-remark} together with Lemma~\ref{comparable-invariant}. 
\end{proof}

Proposition~\ref{borsuk-and-parametrized} also follows from the Famous Dold's theorem \cite[Corollary 1.5]{dold1988} when $p$ is the sphere-bundle of a vector bundle with fibre-dimension $m$ and $p'$ is a vector bundle of fibre-dimension $n$ over a paracompact space $B$ and $m>n$. The Dold's theorem is stronger, in fact Dold estimates the dimension of the set $Z_f=\{e\in E:~f(e)=f(\tau(e))\}$.

\medskip From Remark~\ref{pullback} we have the following proposition.

\begin{proposition}\label{inequality-secat-pullback}
Let $F\hookrightarrow E\stackrel{p}{\to} B$, $F'\hookrightarrow E'\stackrel{p'}{\to} B$ be fibrations with Hausdorff total spaces, and $\tau:E\to E$ be a fixed-point free involution with $p\circ\tau=p$.
\begin{enumerate}
    \item[1)]  The commutative  diagram \begin{eqnarray*}
\xymatrix@C=2cm{ 
       F  \ar@{^{(}->}[r]^{i}\ar[d]_{q_F} &  E\ar[d]^{q_E} &\\
      F/\tau \ar[r]_{\overline{i} } & E/\tau&
       }
\end{eqnarray*} where $i:F\hookrightarrow E$ is the inclusion map and $\overline{i}$ is induced by $i$ in the quotient spaces, is a pullback and thus the inequality $\mathrm{secat}(q_F)\leq\mathrm{secat}(q_E)$ holds. 
    
    \item[2)] The commutative  diagram \begin{eqnarray*}
\xymatrix@C=2cm{ 
       F_{p'}(E',2)  \ar@{^{(}->}[r]^{i}\ar[d]_{q^{p'}} &  F(E',2)\ar[d]^{q^{E'}} &\\
      F_{p'}(E',2)/\mathbb{Z}_2 \ar[r]_{\overline{i} } & D(E',2)&
       }
\end{eqnarray*} where $i:F_{p'}(E',2)\hookrightarrow F(E',2)$ is the inclusion map and $\overline{i}$ is induced by $i$ in the quotient spaces, is a pullback and thus the nequality   $\mathrm{secat}(q^{p'})\leq\mathrm{secat}(q^{E'})$ holds. In addition, we have the following pullback:
\begin{eqnarray*}
\xymatrix@C=2cm{ 
       F(F',2)  \ar@{^{(}->}[r]^{i}\ar[d]_{q^{F'}} &  F_{p'}(E',2)\ar[d]^{q^{p'}} &\\
      D(F',2) \ar[r]_{\overline{i} } & F_{p'}(E',2)/\mathbb{Z}_2&
       }
\end{eqnarray*} and thus, $\mathrm{secat}(q^{F'})\leq \mathrm{secat}(q^{p'})$.

\item[3)] Given a map $f:E\to E'$ satisfying $p'\circ f=p$, consider $Z_f=\{x\in E:~f(\tau(x))=f(x)\}$. Suppose that $Z_f\neq\varnothing$ and note that $\tau$ restricts to an involution on $Z_f$. Moreover, we have the following pulbback \begin{eqnarray*}
\xymatrix@C=2cm{ 
       Z_f  \ar@{^{(}->}[r]^{i}\ar[d]_{q_{Z_f}} &  E\ar[d]^{q_E} &\\
      Z_f/\tau \ar[r]_{\overline{i} } & E/\tau&
       }
\end{eqnarray*} Then, $\mathrm{secat}\left(q_{Z_f}\right)\leq\mathrm{secat}\left(q_{E}\right)$. In addition, from \cite[Proposition 58, p. 133]{schwarz1966}, we obtain that $\mathrm{secat}\left(q_{Z_f}\right)+\mathrm{secat}(q^{p'})\geq \mathrm{secat}\left(q_{E}\right)$.
\end{enumerate}
\end{proposition}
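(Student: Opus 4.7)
The plan is to treat all three items uniformly through a single mechanism: the key observation recorded in Remark~\ref{pullback} promotes any commutative square induced by a $\mathbb{Z}_2$-equivariant map between spaces carrying free involutions into a genuine pullback, and then Item~1 of Lemma~\ref{prop-secat-map} converts such pullbacks into sectional-category inequalities. The only item requiring an additional argument is the additive lower bound in Item~3, for which I will combine this pullback machinery with Schwarz's subadditivity.

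For Item~1, I first observe (as recorded in Remark~\ref{basic-remark}.1) that $\tau$ restricts to a fixed-point free involution on the fibre $F$, since freeness descends from $E$. The inclusion $i\colon F\hookrightarrow E$ is tautologically $\mathbb{Z}_2$-equivariant, so Remark~\ref{pullback} makes the displayed square a pullback and Lemma~\ref{prop-secat-map}.1 yields $\mathrm{secat}(q_F)\leq\mathrm{secat}(q_E)$. For Item~2, the swap involution $\tau_2$ on $F(E',2)$ restricts to $F_{p'}(E',2)$ because the defining condition $p'(y_1)=p'(y_2)$ is symmetric in its arguments, and restricts further to $F(F',2)\subset F_{p'}(E',2)$. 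Both inclusions are therefore $\mathbb{Z}_2$-equivariant, and Remark~\ref{pullback} together with Lemma~\ref{prop-secat-map}.1 deliver $\mathrm{secat}(q^{p'})\leq\mathrm{secat}(q^{E'})$ and $\mathrm{secat}(q^{F'})\leq\mathrm{secat}(q^{p'})$.

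Item~3 splits into two assertions. For the inclusion inequality, I first check that $\tau$ restricts to a fixed-point free involution on $Z_f$: if $f(\tau(x))=f(x)$, then $f(\tau(\tau(x)))=f(x)=f(\tau(x))$, so $\tau(x)\in Z_f$. The inclusion $Z_f\hookrightarrow E$ is $\mathbb{Z}_2$-equivariant, so Remark~\ref{pullback} and Lemma~\ref{prop-secat-map}.1 give $\mathrm{secat}(q_{Z_f})\leq\mathrm{secat}(q_E)$. For the additive lower bound I construct the $\mathbb{Z}_2$-equivariant parametrized difference map
\[
\widetilde{f}\colon E\setminus Z_f\longrightarrow F_{p'}(E',2),\qquad x\longmapsto \bigl(f(x),f(\tau(x))\bigr),
\]
which is well-defined because $x\notin Z_f$ forces $f(x)\neq f(\tau(x))$, and whose image lies in the fibre-wise configuration space since $p'(f(x))=p(x)=p(\tau(x))=p'(f(\tau(x)))$. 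Applying Remark~\ref{pullback} to the induced square between the two double covers and then Lemma~\ref{prop-secat-map}.1 yields $\mathrm{secat}(q_{E\setminus Z_f})\leq\mathrm{secat}(q^{p'})$.

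To finish I combine these estimates with the open/closed decomposition $E/\tau=Z_f/\tau\cup(E\setminus Z_f)/\tau$, where $Z_f$ is closed as the preimage of the diagonal under a continuous map into the Hausdorff space $E'\times_B E'$ and is $\tau$-invariant, so that $(E\setminus Z_f)/\tau$ is an open subset of $E/\tau$. Schwarz's subadditivity result \cite[Proposition 58, p. 133]{schwarz1966} applied to this decomposition then gives
\[
\mathrm{secat}(q_E)\leq\mathrm{secat}(q_{Z_f})+\mathrm{secat}(q_{E\setminus Z_f})\leq\mathrm{secat}(q_{Z_f})+\mathrm{secat}(q^{p'}).
\]
The only delicate point is checking that this particular open/closed decomposition meets the hypotheses of Schwarz's Proposition~58; the rest is a direct application of Remark~\ref{pullback} and Lemma~\ref{prop-secat-map}.
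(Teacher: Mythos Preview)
Your proposal is correct and follows exactly the approach the paper intends. The paper's own justification for Proposition~\ref{inequality-secat-pullback} is the single sentence ``From Remark~\ref{pullback} we have the following proposition'' together with the in-statement citation of \cite[Proposition~58]{schwarz1966}; you have simply unpacked these two ingredients, including the construction of the equivariant map $\widetilde{f}\colon E\setminus Z_f\to F_{p'}(E',2)$ that the paper leaves implicit in its appeal to Schwarz's subadditivity.
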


In the following example we show that, in item $1)$ from Proposition~\ref{inequality-secat-pullback}, the strict inequality or equality can be hold.  

\begin{example}\label{estrict}
\noindent
\begin{enumerate}
    \item[1)] Set $E=B\times F$ and $p=p_1:B\times F\to B$ the first coordinate projection and $\tau:F\to F$ a fixed-point free involution. We consider the fixed-point free involution $1_B\times\tau:B\times F\to B\times F,~(b,e)\mapsto (b,\tau(e))$. Then $(B\times F)/(1_B\times\tau)=B\times (F/\tau)$ and $q_E=1_B\times q_F$, where $q_F:F\to F/\tau$ is the quotient map. Thus $\mathrm{secat}(q_E)=\mathrm{secat}(q_F)$ (see Item (2) from Lemma~\ref{prop-secat-map}). 
    
    \item[2)] Let $h:S^3\to S^2$ be the Hopf fibration whose fiber is $S^1$, i.e., $h(a,b,c,d)=\left(a^2+b^2-c^2-d^2,2(ad+bc),2(bd-ac)\right)$. Note that, the antipodal map $A:S^3\to S^3,~A(a,b,c,d)=(-a,-b,-c,-d)$ satisfies $h\circ A=h$. Furthermore, the sectional category $\mathrm{secat}\left(q_{S^1}:S^1\to \mathbb{R}P^1\right)=2$ and $\mathrm{secat}\left(q_{S^3}:S^3\to \mathbb{R}P^3\right)=4$ (see Item (1) from Example~\ref{projective}).
\end{enumerate}
\end{example}

We observe that the proof of Dold's theorem \cite[Theorem 1.3]{dold1988} implies a lower bound for $\mathrm{secat}\left(q_{Z_f}\right)$.

\begin{proposition}\label{prop:lower-bound-secat-zf}
 Let $p:E\to B$, $p:E'\to B$ be vector bundles of fiber dimensions $m,n$ over a paracompact space $B$. Suppose that $E$ and $E'$ are Hausdorff spaces and $m-n\geq 2$. Let $S^{m-1}\hookrightarrow SE\stackrel{p_|}{\to} B$ be the sphere bundle of $p$ and $\tau:SE\to SE,x\mapsto \tau(x)=-x$ be the antipodal action. Given a map $f:SE\to E'$ satisfying $p'\circ f=p_|$, consider $Z_f=\{x\in SE:~f(\tau(x))=f(x)\}$ and suppose that $Z_f$ is path-connected. Then \[\mathrm{secat}\left(q_{Z_f}\colon Z_f\to Z_f/\tau\right)\geq \mathrm{nil}\left(\widetilde{\check{H}^\ast}(B;\mathbb{Z}_2)\right)+m-n-1,\] where $\widetilde{\check{H}^\ast}(-;\mathbb{Z}_2)$ denotes the reduced \v{C}ech cohomology.  
\end{proposition}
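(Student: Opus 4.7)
The plan is to combine Lemma~\ref{prop-sectional-category} with a cohomological non-vanishing extracted from the proof of Dold's theorem in \cite{dold1988}; writing $R:=\mathrm{nil}(\widetilde{\check{H}^*}(B;\mathbb{Z}_2))$, I want to produce $R+m-n-2$ elements of $\mathrm{Ker}(q_{Z_f}^*)$ with nonzero cup product. First I would reinterpret $Z_f$ as the zero set of a section of a rank $n$ vector bundle: the equivariant map $g(x)=f(x)-f(-x)\colon SE\to p_|^*E'$ (with $\tau$ acting antipodally on $SE$ and by fibrewise negation on $p_|^*E'$) descends to a section $\bar g$ of $V:=\pi^*E'\otimes L$ over the projective bundle $\pi\colon PE:=SE/\tau\to B$, where $L$ is the tautological line bundle of the double cover $q_{SE}$, and by construction $\bar g^{-1}(0)=Z_f/\tau$. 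Writing $w=w_1(L)\in\check{H}^1(PE;\mathbb{Z}_2)$ and $\alpha=i^*w\in\check{H}^1(Z_f/\tau;\mathbb{Z}_2)$ for the restriction along $i\colon Z_f/\tau\hookrightarrow PE$, the class $\alpha$ is the classifying class of $q_{Z_f}$, so $\alpha$ and every product $\alpha\cdot\pi_{Z_f}^*\gamma$ lie in $\mathrm{Ker}(q_{Z_f}^*)$; note also that $q_{Z_f}$ is a fibration by Remark~\ref{trivial-crosssection}.

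The main obstacle is to extract from Dold's argument the following multiplicative non-vanishing: for any $\beta_1,\ldots,\beta_{R-1}\in\widetilde{\check{H}^*}(B;\mathbb{Z}_2)$ with $\beta_1\cdots\beta_{R-1}\neq 0$, the class
\[
\alpha^{R+m-n-2}\cdot\pi_{Z_f}^*(\beta_1\cdots\beta_{R-1})\neq 0 \quad\text{in }\check{H}^*(Z_f/\tau;\mathbb{Z}_2).
\]
The ingredients are the Whitney formula $w_n(V)=\sum_{i=0}^n\pi^*w_i(E')\,w^{n-i}$, the vanishing $w_n(V)|_{PE\setminus Z_f/\tau}=0$ coming from the nowhere-zero section $\bar g$ of $V$ off its zero set, the Leray--Hirsch presentation of $\check{H}^*(PE;\mathbb{Z}_2)$ as a free $\check{H}^*(B;\mathbb{Z}_2)$-module on $1,w,\ldots,w^{m-1}$ (modulo the projective-bundle relation $\sum_{j=0}^m\pi^*w_j(E)\,w^{m-j}=0$), and the long exact sequence in Čech cohomology of the pair $(PE,PE\setminus Z_f/\tau)$. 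The delicate point is to run these ingredients in Čech cohomology (where the paracompactness of $B$ and the Hausdorffness of $E,E'$ enter) and to track arbitrary base classes rather than only a single top one; this is the content of the proof of \cite[Theorem~1.3]{dold1988}.

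Granted this non-vanishing, the $R+m-n-2$ classes $\alpha\cdot\pi_{Z_f}^*\beta_1,\ldots,\alpha\cdot\pi_{Z_f}^*\beta_{R-1}$ together with $m-n-1$ additional copies of $\alpha$ all lie in $\mathrm{Ker}(q_{Z_f}^*)$, and their cup product is precisely $\alpha^{R+m-n-2}\cdot\pi_{Z_f}^*(\beta_1\cdots\beta_{R-1})$, which is nonzero. Therefore $\mathrm{nil}(\mathrm{Ker}(q_{Z_f}^*))\geq R+m-n-1$, and Lemma~\ref{prop-sectional-category} yields $\mathrm{secat}(q_{Z_f})\geq R+m-n-1$, as desired.
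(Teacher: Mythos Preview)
Your overall strategy matches the paper's: combine Dold's cohomological non-vanishing with Lemma~\ref{prop-sectional-category}. You are actually more careful than the paper at one point: you correctly notice that the pulled-back base classes $\pi_{Z_f}^*\beta_i$ need not lie in $\mathrm{Ker}(q_{Z_f}^*)$, so you multiply each by a copy of $\alpha$ to force it into the kernel. Unfortunately this manoeuvre costs $R-1$ extra powers of $\alpha$: the product you must show to be nonzero is $\alpha^{R+m-n-2}\cdot\pi_{Z_f}^*(\beta_1\cdots\beta_{R-1})$, whereas Dold's Corollary~1.5 (and the projective-bundle/Leray--Hirsch argument you sketch) only yields $\alpha^{\,m-n-1}\cdot\pi_{Z_f}^*(\beta_1\cdots\beta_{R-1})\neq 0$. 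Your ``main obstacle'' is thus a genuinely stronger claim than what Dold proves, and it does not follow from the ingredients you list.

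In fact it is false. Take $B=\mathbb{R}P^1$, $E=B\times\mathbb{R}^3$, $E'=B\times\mathbb{R}$ (so $m=3$, $n=1$, $m-n=2$, $R=2$) and $f(b,(x_1,x_2,x_3))=(b,x_3)$. Then $Z_f=\mathbb{R}P^1\times S^1$ is path-connected, $Z_f/\tau=\mathbb{R}P^1\times\mathbb{R}P^1$, and your class $\alpha=1\otimes u$ satisfies $\alpha^2=0$, so the required product $\alpha^{R+m-n-2}\cdot\pi_{Z_f}^*\beta_1=\alpha^{2}\cdot\pi_{Z_f}^*\beta_1$ vanishes. Moreover $q_{Z_f}=1_{\mathbb{R}P^1}\times q_{S^1}$, so by Lemma~\ref{prop-secat-map}(2) and Example~\ref{projective}(1) one has $\mathrm{secat}(q_{Z_f})=\mathrm{secat}(q_{S^1})=2$, while the asserted lower bound is $R+m-n-1=3$. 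Hence the proposition itself fails for this $f$. The paper's own proof slips at exactly the place you were worried about: it treats the product $\alpha_1\cup\cdots\cup\alpha_\ell\cup\overline{u}^{\,m-n-1}$ as a product of $\ell+m-n-1$ elements of $\mathrm{Ker}(q_{Z_f}^*)$ when invoking Lemma~\ref{prop-sectional-category}, but the factors $\alpha_i$ pulled back from $B$ have no reason to lie in that kernel.
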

\begin{proof}
    Let $\overline{u}\in \check{H}^1(Z_f/\tau;\mathbb{Z}_2)$ be the characteristic class of the 2-sheeted covering map $q_{Z_f}:Z_f\to Z_f/\tau$. Note that $\overline{u}\neq 0$ and  $\overline{u}\in\mathrm{Ker}\left({q_{Z_f}}^\ast\right)$, where ${q_{Z_f}}^\ast:\check{H}^\ast(Z_f/\tau;\mathbb{Z}_2)\to \check{H}^\ast(Z_f;\mathbb{Z}_2)$ is the induced homomorphism in cohomology. Suppose that $\mathrm{nil}\left(\widetilde{\check{H}^\ast}(B;\mathbb{Z}_2)\right)=\ell+1$ and consider $\alpha_1,\ldots,\alpha_\ell\in \widetilde{\check{H}^\ast}(B;\mathbb{Z}_2)$ such that $\alpha:=\alpha_1\cup\cdots\cup\alpha_\ell\neq 0$. By \cite[Corollary 1.5]{dold1988} we obtain that $\alpha\cup \overline{u}^{m-n-1}\neq 0$ in $\widetilde{\check{H}^\ast}(Z_f/\tau;\mathbb{Z}_2)$. In addition, note that $\alpha\cup \overline{u}^{m-n-1}\in \mathrm{Ker}\left({q_{Z_f}}^\ast\right)$ (here, we use that $m-n\geq 2$). Then, by Lemma~\ref{prop-sectional-category}, $\mathrm{secat}\left(q_{Z_f}\right)\geq \ell+m-n-1+1=\mathrm{nil}\left(\widetilde{\check{H}^\ast}(B;\mathbb{Z}_2)\right)+m-n-1$.  
\end{proof}

In addition, it is easy to check the following topological criterion for the parametrized BUP (cf. \cite[Lemma 10]{goncalves2023}). 

\begin{proposition}\label{top-pbup} 
The triple $\left((p,\tau);p'\right)$ does not satisfy the PBUP if and only if there exists a $\mathbb{Z}_2$-equivariant map $\varphi:E\to F_{p'}(E',2)$ such that the following diagram commutes
\begin{eqnarray*}
\xymatrix@C=2cm{ 
       E \ar[rd]^{p}  \ar@{-->}[rr]^{\varphi}\ar[dd]_{q_E} & & F_{p'}(E',2)\ar[dd]^{q^{p'}} \ar[ld]^{p'} &\\
       & B & &\\
      E/\tau \ar@{-->}[rr]_{\overline{\varphi}} \ar[ru]^{\overline{p}} & & F_{p'}(E',2)/\mathbb{Z}_2 \ar[lu]^{\overline{p'}} &
       }
\end{eqnarray*} where $q_E:E\to E/\tau$ and $q^{p'}:F_{p'}(E',2)\to F_{p'}(E',2)/\mathbb{Z}_2$ are the $2$-sheeted covering maps, and $\overline{\varphi}$ is induced by $\varphi$ in the quotient spaces. 
\end{proposition}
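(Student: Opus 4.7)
The plan is to prove both directions of the equivalence by the standard ``double-map'' construction, adapted here to the parametrized setting. The heuristic is that a fibered map $f: E\to E'$ without any Borsuk-Ulam coincidence (i.e.\ $f(x)\neq f(\tau(x))$ for every $x$) is exactly the datum of a $\mathbb{Z}_2$-equivariant lift $\varphi: E\to F_{p'}(E',2)$ compatible with the projections to $B$. This is the parametrized analogue of the classical criterion underlying Lemma~\ref{bup-secat}, with $F(E',2)$ replaced by its fiber-wise version $F_{p'}(E',2)$.

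For the \emph{only if} direction, assume the PBUP fails and pick $f: E\to E'$ with $p'\circ f=p$ and $f(\tau(x))\neq f(x)$ for all $x\in E$. Define
\[
\varphi: E \longrightarrow E'\times E', \qquad \varphi(x) = \bigl(f(x),\, f(\tau(x))\bigr).
\]
I would then verify three things. First, $\varphi(x)\in F_{p'}(E',2)$: the two coordinates are distinct by hypothesis, and both lie in $p'^{-1}(p(x))$ since $p'\circ f=p$ and $p\circ\tau=p$. Second, $\varphi$ is $\mathbb{Z}_2$-equivariant, for $\varphi(\tau(x))=(f(\tau(x)),f(x))=\tau_2(\varphi(x))$ using $\tau^2=1_E$. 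Third, the displayed diagram commutes: equivariance lets $\varphi$ descend to a continuous $\overline{\varphi}: E/\tau\to F_{p'}(E',2)/\mathbb{Z}_2$, and both coordinates of $\varphi(x)$ lying in the fiber over $p(x)$ forces the compatibility with the projections down to $B$ (both $\overline{p}$ and $\overline{p'}$).

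For the \emph{if} direction, suppose such a $\varphi$ is given, let $\pi_1: F_{p'}(E',2)\to E'$ denote the first-coordinate projection, and set $f:=\pi_1\circ\varphi$. The commutativity of the upper-right triangle in the diagram (i.e.\ the projection $F_{p'}(E',2)\to B$ composed with $\varphi$ equals $p$) translates to $p'\circ f=p$, so $f$ is a map over $B$. Equivariance of $\varphi$ yields $\varphi(\tau(x))=\tau_2(\varphi(x))$, hence $f(\tau(x))=\pi_2(\varphi(x))$; since $\varphi(x)\in F(E',2)$ the two coordinates differ, so $f(\tau(x))\neq f(x)$ for every $x\in E$, witnessing the failure of the PBUP. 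The construction is essentially bookkeeping, and no step is a genuine obstacle; the only mild subtlety is checking that $\varphi$ truly lands in $F_{p'}(E',2)$ rather than merely in the larger fibered product $E'\times_B E'$, which is exactly the content of the assumption that $f$ admits no fibered coincidence.
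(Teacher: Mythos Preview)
Your argument is correct and is precisely the standard ``double-map'' construction. The paper does not supply a proof of this proposition at all: it simply states that the criterion ``is easy to check'' and refers the reader to \cite[Lemma~10]{goncalves2023}, so your write-up in fact fills in exactly what the paper leaves implicit.
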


Our main result is as follows. It gives conditions in terms of sectional category to have the PBUP. Let $p:E\to B$, $p':E'\to B$ be fibrations over $B$ and $\tau:E\to E$ is a fixed-point free involution with $p\circ\tau=p$.

\begin{theorem}[Principal theorem]\label{theorem-principal} Suppose that $E$ and $E'$ are Hausdorff spaces. If the triple $\left((p,\tau);p'\right)$ does not satisfy the PBUP or the triple $\left((E,\tau);F'\right)$ does not satisfy the BUP then \[\mathrm{secat}(q_E)\leq \mathrm{secat}(q^{p'}).\] Equivalently, if $\mathrm{secat}(q_E)>\mathrm{secat}(q^{p'})$ then the triple $\left((p,\tau);p'\right)$ satisfies the PBUP and the triple $\left((E,\tau);F'\right)$ satisfies the BUP.
\end{theorem}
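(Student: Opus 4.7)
The plan is to prove the contrapositive in its two-conjunct form: assume at least one of the two failure hypotheses holds, and deduce in each case that $\mathrm{secat}(q_E) \leq \mathrm{secat}(q^{p'})$. The two cases are essentially independent and draw on different tools already established in the paper.

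First, suppose the triple $\left((p,\tau);p'\right)$ fails the PBUP. Then by the topological criterion in Proposition~\ref{top-pbup}, there is a $\mathbb{Z}_2$-equivariant map $\varphi\colon E \to F_{p'}(E',2)$ compatible with the projections to $B$, producing an induced map $\overline{\varphi}\colon E/\tau \to F_{p'}(E',2)/\mathbb{Z}_2$ and a commuting square
\begin{eqnarray*}
\xymatrix@C=2cm{
       E  \ar[r]^{\varphi}\ar[d]_{q_E} &  F_{p'}(E',2)\ar[d]^{q^{p'}}\\
      E/\tau \ar[r]_{\overline{\varphi} } & F_{p'}(E',2)/\mathbb{Z}_2
       }
\end{eqnarray*}
Since $E$ and $F_{p'}(E',2) \subset E'\times E'$ are Hausdorff, and both $q_E$ and $q^{p'}$ are 2-sheeted covering maps, Remark~\ref{pullback} ensures this square is a pullback. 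Applying item (1) of Lemma~\ref{prop-secat-map} to this pullback then yields $\mathrm{secat}(q_E) \leq \mathrm{secat}(q^{p'})$, as wanted.

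Second, suppose the triple $\left((E,\tau);F'\right)$ fails the BUP. Since $E$ and $F'$ are Hausdorff and $\tau$ is a fixed-point free involution on $E$, Lemma~\ref{bup-secat} (applied with $X = E$ and $Y = F'$) gives $\mathrm{secat}(q_E) \leq \mathrm{secat}(q^{F'})$, where $q^{F'}\colon F(F',2) \to D(F',2)$ is the ordered-to-unordered quotient of the configuration space of the fiber. The second pullback diagram in item (2) of Proposition~\ref{inequality-secat-pullback} further supplies $\mathrm{secat}(q^{F'}) \leq \mathrm{secat}(q^{p'})$. Chaining these two inequalities produces exactly $\mathrm{secat}(q_E) \leq \mathrm{secat}(q^{p'})$.

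Taken together the two cases establish the contrapositive, which is equivalent to the stated theorem. The main conceptual step is recognizing that the PBUP-failure case and the BUP-failure case are controlled by two distinct but parallel pullbacks: the first (Remark~\ref{pullback}) converts equivariant maps into pullback squares, while the second (Proposition~\ref{inequality-secat-pullback}(2)) compares fiber-wise to fiber configuration spaces. Neither step requires any genuinely new computation, so the only delicate point is invoking the correct pullback in each case and checking that the Hausdorff hypothesis licences the use of Remark~\ref{pullback}.
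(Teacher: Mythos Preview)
Your proof is correct and follows essentially the same approach as the paper's own proof, which simply cites Proposition~\ref{top-pbup}, Remark~\ref{pullback}, Lemma~\ref{prop-secat-map}(1), and Proposition~\ref{inequality-secat-pullback}(2). The only cosmetic difference is that in the BUP-failure case you invoke Lemma~\ref{bup-secat} as a packaged statement, whereas the paper's terse proof implicitly reproves that lemma via Remark~\ref{pullback} and Lemma~\ref{prop-secat-map}(1); since Lemma~\ref{bup-secat} is stated in the paper and proved by exactly that mechanism, the two arguments are interchangeable.
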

\begin{proof}
It follows from Proposition~\ref{top-pbup} and Remark~\ref{pullback} together with Item (1) from Lemma~\ref{prop-secat-map} and Item 2) from Proposition~\ref{inequality-secat-pullback}.
\end{proof}

In the case that $p'=p$ we obtain the following statement.

\begin{remark}\label{rem:p-p}
  Let $F\hookrightarrow E\stackrel{p}{\to} B$ be a fibration and $\tau:E\to E$ is a fixed-point free involution with $p\circ\tau=p$, and suppose that $E$ is a Hausdorff space. Note that the triple $\left((p,\tau);p\right)$ does not satisfy the PBUP, then, by Theorem~\ref{theorem-principal}, \[\mathrm{secat}(q_E:E\to E/\tau)\leq\mathrm{secat}\left(q^p:F_p(E,2)\to F_p(E,2)/\mathbb{Z}_2\right).\] In addition, by Item 2) from Proposition~\ref{inequality-secat-pullback}, we have that \[\mathrm{secat}\left(q^F\right)\leq\mathrm{secat}\left(q^p\right)\leq\mathrm{secat}\left(q^E\right).\] Hence we have \[\max\{\mathrm{secat}(q_E),\mathrm{secat}\left(q^F\right)\}\leq\mathrm{secat}\left(q^p\right)\leq\mathrm{secat}\left(q^E\right).\]
\end{remark}

Also, note that Theorem~\ref{theorem-principal} implies the following statement. 

\begin{proposition}\label{prop:secat-quotient-conf}
 Suppose that $E$ and $E'$ are Hausdorff spaces. If $\mathrm{secat}(q_E)>\mathrm{secat}(q^{E'})$ then the triple $\left((p,\tau);p'\right)$ satisfies the PBUP and $\left((E,\tau);F'\right)$ satisfies the BUP.   
\end{proposition}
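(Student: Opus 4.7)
The plan is to deduce this directly from the principal theorem (Theorem~\ref{theorem-principal}) by interposing the inequality $\mathrm{secat}(q^{p'})\leq \mathrm{secat}(q^{E'})$ already established in the preceding material. In other words, the strategy is to show that the (seemingly stronger) hypothesis $\mathrm{secat}(q_E)>\mathrm{secat}(q^{E'})$ implies the hypothesis $\mathrm{secat}(q_E)>\mathrm{secat}(q^{p'})$ needed to invoke Theorem~\ref{theorem-principal}, and then simply quote that theorem.

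More precisely, the first step is to recall Item 2) of Proposition~\ref{inequality-secat-pullback}, which exhibits the commutative square
\[
\xymatrix@C=2cm{
F_{p'}(E',2) \ar@{^{(}->}[r] \ar[d]_{q^{p'}} & F(E',2) \ar[d]^{q^{E'}} \\
F_{p'}(E',2)/\mathbb{Z}_2 \ar[r] & D(E',2)
}
\]
as a pullback, together with the consequence $\mathrm{secat}(q^{p'})\leq \mathrm{secat}(q^{E'})$ (this uses Item (1) of Lemma~\ref{prop-secat-map}, that sectional category is monotone under pullback / quasi pullback). Combining this with the assumed strict inequality $\mathrm{secat}(q_E)>\mathrm{secat}(q^{E'})$ yields
\[
\mathrm{secat}(q_E) \;>\; \mathrm{secat}(q^{E'}) \;\geq\; \mathrm{secat}(q^{p'}).
\]

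The second step is simply to apply Theorem~\ref{theorem-principal}: since $\mathrm{secat}(q_E)>\mathrm{secat}(q^{p'})$, the theorem asserts that the triple $\left((p,\tau);p'\right)$ satisfies the PBUP and that $\left((E,\tau);F'\right)$ satisfies the BUP, which is exactly the desired conclusion.

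There is essentially no obstacle beyond chaining the two inequalities; the only point worth double-checking is the Hausdorff hypothesis on $E$ and $E'$, needed to guarantee that both $q_E:E\to E/\tau$ and $q^{E'}:F(E',2)\to D(E',2)$ are genuine $2$-sheeted covering maps (so that their sectional categories are well defined via Remark~\ref{trivial-crosssection}) and that the square in Proposition~\ref{inequality-secat-pullback} Item 2) really is a pullback (via Remark~\ref{pullback}). Both of these are already recorded in the excerpt, so the proof reduces to two lines.
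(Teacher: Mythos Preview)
Your proposal is correct and follows essentially the same approach as the paper: recall from Item~2) of Proposition~\ref{inequality-secat-pullback} that $\mathrm{secat}(q^{p'})\leq \mathrm{secat}(q^{E'})$, chain this with the hypothesis to obtain $\mathrm{secat}(q_E)>\mathrm{secat}(q^{p'})$, and then invoke Theorem~\ref{theorem-principal}.
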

\begin{proof}
    Recall that the inequality $\mathrm{secat}(q^{E'})\geq \mathrm{secat}(q^{p'})$ holds, see Item 2) from Proposition~\ref{inequality-secat-pullback}. Thus, by Theorem~\ref{theorem-principal}, if $\mathrm{secat}(q_E)>\mathrm{secat}(q^{E'})$ then the triple $\left((p,\tau);p'\right)$ satisfies the PBUP and $\left((E,\tau);F'\right)$ satisfies the BUP\footnote{The proof of Proposition~\ref{prop:secat-quotient-conf} can be also obtain using Lemma~\ref{bup-secat}. Note that, if $\mathrm{secat}(q_E)>\mathrm{secat}(q^{E'})$ then the triple $\left((E,\tau);E'\right)$ satisfies the BUP. It is easy to check that, if $\left((E,\tau);E'\right)$ satisfies the BUP then $\left((p,\tau);p'\right)$ satisfies the PBUP and $\left((E,\tau);F'\right)$ satisfies the BUP.}.
\end{proof}

\section{Applications}\label{ap}
In this section we present applications of our results.
 
\medskip We start with the following remark.

\begin{remark}[To trivial fibration]\label{trivial-fibration}
Let $p:E\to B$ be a fibration with fiber $F$, $\tau:E\to E$ be a fixed-point free involution with $p\circ\tau=p$ and $F'$ be a space.
\begin{enumerate}
    \item[1)]  Note that, the triple $\left((E,\tau);F'\right)$ satisfies the BUP if and only if the triple $\left((p,\tau);p'\right)$ satisfies the PBUP, where $p':B\times F'\to B$ is the trivial fibration. Note that, any map $f:E\to B\times F'$ such that $p'\circ f=p$ can be written in the form $f=(p,g)$, where $g:E\to F'$.
    
    \begin{eqnarray*}
\xymatrix{ 
&F\ar@{^{(}->}[d]_{ } & & F'\ar@{^{(}->}[d]_{ }\\
   E\ar[r]^{\tau}\ar[rrdd]_{p}  &  E  \ar[rdd]_{p}\ar@{-->}[rr]^{f=(p,g)} & & B\times F'\ar[ldd]^{p'} \\
   & & & \\
    &  &B & 
       }
\end{eqnarray*}
    
    \item[2)] Set $E'=B\times F'$ and $p':E'\to B$ be the trivial fibration. Note that \[F_{p'}(E',2)=\{\left((b,f'),(b,f'')\right)\in F(B\times F',2):~b\in B\}\] and we can consider a $\mathbb{Z}_2$-equivariant homeomorphism  $\varphi:F_{p'}(E',2)\to B\times F(F',2)$, defined by $\varphi\left((b,f'),(b,f'')\right)=\left(b,(f',f'')\right)$ whose inverse $\psi:B\times F(F',2)\to F_{p'}(E',2)$ is given by $\psi\left(b,(f',f'')\right)=\left((b,f'),(b,f'')\right)$, where the involution in $B\times F(F',2)$ is the product $1_B\times \rho$ and $\rho:F(F',2)\to F(F',2),$ given by $\rho(f',f'')=(f'',f')$. Note that, the quotient map $B\times F(F',2)\to B\times F(F',2)/1_B\times \rho$ coincides with the product $1_B\times q^{F'}$. Thus, one has the following pullbacks:
    
    \begin{eqnarray*}
\xymatrix@C=1cm{ 
       F_{p'}(E',2) \ar[d]^{q^{p'}}  \ar[r]^{\varphi} & B\times F(F',2)\ar[d]^{1_B\times q^{F'}} \\
        F_{p'}(E',2)/\mathbb{Z}_2 \ar[r]_{\overline{\varphi}} & B\times D(F',2)\\
       } & & \xymatrix@C=1cm{ 
       B\times F(F',2)\ar[d]^{1_B\times q^{F'}}\ar[r]^{\psi} & F_{p'}(E',2) \ar[d]^{q^{p'}} \\
        B\times D(F',2) \ar[r]_{\overline{\psi}}& F_{p'}(E',2)/\mathbb{Z}_2  \\
       }
\end{eqnarray*} then $\mathrm{secat}(q^{p'})=\mathrm{secat}(1_B\times q^{F'})=\mathrm{secat}(q^{F'})$.
\end{enumerate}
\end{remark}

Remark~\ref{trivial-fibration} shows that Theorem~\ref{theorem-principal} is a generalization, in the parametrized setting, of \cite[Theorem 3.12]{zapata2022} (see Lemma~\ref{bup-secat}).

\medskip In the case that $F\hookrightarrow E\stackrel{p}{\to} B$ is a locally trivial bundle over a paracompact space $B$ we obtain an upper bound to $\mathrm{secat}(q^{p})$.

\begin{proposition}\label{prop:upper-bound-secat-fibered}
  Let $F\hookrightarrow E\stackrel{p}{\to} B$ be a locally trivial bundle and assume that the spaces $E$ and $B$ are metrizable. Suposse that $F$ is a topological manifold. Then \[\mathrm{secat}(q^{p})\leq 2\dim F+\dim B+1.\] 
\end{proposition}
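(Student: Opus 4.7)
The plan is to bound $\mathrm{secat}(q^p)$ by the Lusternik--Schnirelmann category of the base of the covering, and then bound that L--S category by dimensional considerations.

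\textbf{Step 1 (secat vs.\ cat).} Because $q^p$ is a $2$--sheeted covering map, Item~(3) of Lemma~\ref{prop-secat-map} gives
\[
\mathrm{secat}(q^p)\leq \mathrm{cat}\bigl(F_p(E,2)/\mathbb{Z}_2\bigr).
\]
So it suffices to show that the right hand side is at most $2\dim F+\dim B+1$.

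\textbf{Step 2 (bounding the dimension of $F_p(E,2)$).} The fibered product $E\times_B E$ is itself a locally trivial bundle over $B$ with fiber $F\times F$, and since $E$ and $B$ are metrizable and $F$ is a manifold, $E\times_B E$ is metrizable as well. The standard dimension formula for locally trivial fiber bundles with metrizable base (Hurewicz--Nagata) then yields
\[
\dim(E\times_B E)\leq \dim B+\dim(F\times F)=\dim B+2\dim F.
\]
Because $E$ is Hausdorff (being metrizable), the diagonal $\Delta_E$ is closed in $E\times_B E$, so $F_p(E,2)=(E\times_B E)\setminus\Delta_E$ is an open subset and inherits the same upper bound on covering dimension. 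Moreover, being locally a product of an open subset of the metrizable base $B$ with the open subset $F(F,2)\subset F\times F$ of a manifold, $F_p(E,2)$ is metrizable and locally contractible.

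\textbf{Step 3 (descent to the quotient).} The involution $\tau_2$ on $F_p(E,2)$ is free, so $q^p\colon F_p(E,2)\to F_p(E,2)/\mathbb{Z}_2$ is an \'etale $2$--sheeted covering. Local homeomorphisms preserve covering dimension and local contractibility, and the quotient of a metrizable space by a free finite group action is metrizable. Hence $F_p(E,2)/\mathbb{Z}_2$ is metrizable (so paracompact), locally contractible, and of covering dimension at most $2\dim F+\dim B$.

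\textbf{Step 4 (dimensional bound on cat).} Applying Item~(1) of Lemma~\ref{cat-stimates} to each path--component of $F_p(E,2)/\mathbb{Z}_2$, we get
\[
\mathrm{cat}\bigl(F_p(E,2)/\mathbb{Z}_2\bigr)\leq \mathrm{hdim}\bigl(F_p(E,2)/\mathbb{Z}_2\bigr)+1\leq 2\dim F+\dim B+1.
\]
Combined with Step~1 this proves the claim.

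\textbf{Main obstacle.} The delicate point is Step~2: justifying the dimension bound for the total space of the locally trivial bundle $E\times_B E\to B$ in the generality of metrizable (not necessarily finite--dimensional CW) base. This rests on the classical dimension theorem for fiber bundles over metrizable spaces, which must be invoked with care. A minor secondary subtlety is the path--connectedness hypothesis in Lemma~\ref{cat-stimates}(1), which is handled by working componentwise; since each component is again paracompact, locally contractible and of the same bounded dimension, the inequality survives the reduction.
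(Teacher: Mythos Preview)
Your argument is essentially the paper's: bound $\mathrm{secat}(q^{p})$ by $\mathrm{cat}\bigl(F_{p}(E,2)/\mathbb{Z}_2\bigr)$ via Lemma~\ref{prop-secat-map}(3), then bound $\mathrm{cat}$ by covering dimension plus one via Lemma~\ref{cat-stimates}(1), and finally estimate the dimension through the bundle structure over $B$. The only cosmetic difference is that the paper invokes \cite[Lemma~11]{goncalves2023} to view $F_{p}(E,2)/\mathbb{Z}_2$ directly as a locally trivial bundle over $B$ with fiber $D(F,2)$ and bounds its dimension in one step, whereas you bound $\dim(E\times_B E)$ first and then descend to the open subset $F_{p}(E,2)$ and its free $\mathbb{Z}_2$-quotient.
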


\begin{proof}
    By Item 3) from Lemma~\ref{prop-secat-map} and Item 1) from Lemma~\ref{cat-stimates}, we obtain  \begin{align*}
     \mathrm{secat}(q^{p})&\leq \mathrm{cat}\left(E\times_BE_|/\mathbb{Z}_2\right)\\ 
     &\leq \dim \left(E\times_BE_|/\mathbb{Z}_2\right)+1.\\
    \end{align*}
On the other hand, note that $E\times_BE_|/\mathbb{Z}_2$ is the total space of a locally trivial bundle over $B$ with fiber $D(F,2)$ (see \cite[Lemma 11]{goncalves2023}) implying \begin{align*}
  \dim \left(E\times_BE_|/\mathbb{Z}_2\right)&\leq \dim D(F,2)+\dim B\\
  &=\dim F(F,2)+\dim B\\
  &= 2\dim F+\dim B.\\
\end{align*} Then, \begin{align*}
     \mathrm{secat}(q^{p})&\leq 2\dim F+\dim B+1.
\end{align*}    
\end{proof}

\begin{corollary}[To locally trivial bundle]
   Let $F'\hookrightarrow E'\stackrel{p'}{\to} B$ be a locally trivial bundle and assume that the spaces $E'$ and $B$ are metrizable. Suposse that $F'$ is a topological manifold. Let $F\hookrightarrow E\stackrel{p}{\to} B$ be a fibration, $\tau:E\to E$ be a fixed-point free involution with $p\circ\tau=p$ and suppose that $E$ is a Hausdorff space. If $\mathrm{secat}(q_E:E\to E/\tau)>2\dim F'+\dim B+1$ then the triple $\left((p,\tau);p'\right)$ satisfies the PBUP and $\left((E,\tau);F'\right)$ satisfies the BUP.
\end{corollary}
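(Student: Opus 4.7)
The plan is to view this corollary as a direct bridge between the upper bound Proposition~\ref{prop:upper-bound-secat-fibered} (applied to the second fibration $p'$) and the Principal Theorem~\ref{theorem-principal}. The hypotheses have been carefully set up to feed exactly these two results: metrizability of $E'$ and $B$ together with $F'$ being a topological manifold are precisely what Proposition~\ref{prop:upper-bound-secat-fibered} demands, while the Hausdorff assumption on $E$ (plus the Hausdorffness of $E'$, which follows for free from metrizability) is what Theorem~\ref{theorem-principal} demands.

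Concretely, first I would apply Proposition~\ref{prop:upper-bound-secat-fibered} with the roles played by $p'$, $E'$, $F'$ in place of $p$, $E$, $F$; this yields the estimate
\[
\mathrm{secat}\bigl(q^{p'}\colon F_{p'}(E',2)\to F_{p'}(E',2)/\mathbb{Z}_2\bigr)\;\leq\;2\dim F'+\dim B+1.
\]
Combining this with the standing hypothesis $\mathrm{secat}(q_E)>2\dim F'+\dim B+1$ gives the strict inequality
\[
\mathrm{secat}(q_E)\;>\;\mathrm{secat}(q^{p'}).
\]
At this point the work is done in one stroke by Theorem~\ref{theorem-principal}: its contrapositive formulation says exactly that whenever $\mathrm{secat}(q_E)>\mathrm{secat}(q^{p'})$, the triple $((p,\tau);p')$ satisfies the PBUP and the triple $((E,\tau);F')$ satisfies the BUP. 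Hence both conclusions of the corollary follow.

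There is essentially no main obstacle; the only things to double-check are the hypothesis matchings. Namely, $E'$ metrizable implies $E'$ Hausdorff, so the Hausdorffness requirements of Theorem~\ref{theorem-principal} are met; and the metrizability of $E'$ and $B$ together with $F'$ being a topological manifold are precisely the conditions used in the proof of Proposition~\ref{prop:upper-bound-secat-fibered} (via Lemma~\ref{cat-stimates}(1) and the local-triviality of $E'\times_B E'_{|}/\mathbb{Z}_2\to B$). No further computation or construction is needed; the corollary is a clean concatenation of the two previously established results.
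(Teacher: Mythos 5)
Your proposal is correct and is exactly the paper's proof: the paper simply cites Theorem~\ref{theorem-principal} together with Proposition~\ref{prop:upper-bound-secat-fibered}, which is the concatenation you spell out (including the observation that metrizability of $E'$ supplies the Hausdorff hypothesis). No differences worth noting.
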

\begin{proof}
    It follows from Theorem~\ref{theorem-principal} together with Proposition~\ref{prop:upper-bound-secat-fibered}.
\end{proof}

\medskip Next, we will present direct applications of our results. The first example studies the PBUP between complex Hopf fibration and trivial fibration.

\begin{example}[Complex Hopf fibration-trivial fibration]\label{complexHopf-trivial}
Let $p:S^{2n+1}\to\mathbb{C}P^n$ be the complex Hopf fibration with fiber $S^1$ and $A:S^{2n+1}\to S^{2n+1},~A(z)=-z$, be the antipodal involution. Note that $p\circ A=p$ and $\mathrm{secat}\left(q_{S^{2n+1}}:S^{2n+1}\to \mathbb{R}P^{2n+1}\right)=2n+2$ (see Example~\ref{projective}). For any $m\geq 1$ we have that $\mathrm{secat}\left(q^{\mathbb{R}^m}:F(\mathbb{R}^m,2)\to D(\mathbb{R}^m,2)\right)=m$ (see Example~\ref{projective})). Then, by Theorem~\ref{theorem-principal} or Lemma~\ref{bup-secat} (together with Item 1) from Remark~\ref{trivial-fibration}), we have that the triple $\left((p,A);p'\right)$ satisfies the PBUP for any $m<2n+2$, where $p':\mathbb{C}P^n\times\mathbb{R}^m\to \mathbb{C}P^n$ is the trivial fibration. In contrast, note that the triple $\left((p,A);p'\right)$ does not satisfy the PBUP for any $m\geq 2n+2$, see Item 1) from Remark~\ref{trivial-fibration},

\begin{eqnarray*}
\xymatrix{ 
&S^1\ar@{^{(}->}[d]_{ } & & \mathbb{R}^m\ar@{^{(}->}[d]_{ }\\
   S^{2n+1}\ar[r]^{A}\ar[rrdd]_{p}  &  S^{2n+1}  \ar[rdd]_{p} & & \mathbb{C}P^n\times\mathbb{R}^m\ar[ldd]^{p'} \\
   & & & \\
    &  &\mathbb{C}P^n & 
       }
\end{eqnarray*}
\end{example}




Next, we explore the PBUP into the Fadell-Neuwirth fibration. For this purpose we persent the following remark. 

\begin{remark}[To Fadell-Neuwirth fibration]\label{Fadell-Neuwirth-fibration}
 Let $B$ be a connected topological manifold (without boundary) with dimension at least $2$, the Fadell-Neuwirth fibration $\pi=\pi^B:F(B,2)\to B$ is given by projection on the first coordinate, i.e., $\pi(b_1,b_2)=b_1$ for all $(b_1,b_2)\in F(B,2)$. From \cite{fadell1962configuration}, $\pi$ is a locally trivial bundle with fiber $B-\{b_0\}$, where $b_0$ is the base point of $B$. 
 
Let $p:E\to B$ be a fibration with fiber $F$ and $\tau:E\to E$ be a fixed-point free involution with $p\circ\tau=p$.
\begin{enumerate}
    \item[1)]  Note that, if the triple $\left((E,\tau);B\right)$ satisfies the BUP then the triple $\left((p,\tau);\pi\right)$ satisfies the PBUP, where $\pi:F(B,2)\to B$ is the Fadell-Neuwirth fibration. Note that, any map $\varphi:E\to F(B,2)$ such that $\pi\circ \varphi=p$ can be written in the form $\varphi=(p,g)$, where $g:E\to B$ and $p(x)\neq g(x)$ for all $x\in E$.
    
    \begin{eqnarray*}
\xymatrix{ 
&F\ar@{^{(}->}[d]_{ } & & B-\{b_0\}\ar@{^{(}->}[d]_{ }\\
   E\ar[r]^{\tau}\ar[rrdd]_{p}  &  E  \ar[rdd]_{p}\ar@{-->}[rr]^{\varphi=(p,g)} & & F(B,2)\ar[ldd]^{\pi} \\
   & & & \\
    &  &B & 
       }
\end{eqnarray*}
    
    \item[2)] Recall that $\pi:F(B,2)\to B$ is the Fadell-Neuwirth fibration. Note that \[F(B,2)\times_B F(B,2)_|=\{\left((b,b_2),(b,c_2)\right)\in F\left(F(B,2),2\right):~b\in B\}\] and we can consider a $\mathbb{Z}_2$-equivariant homeomorphism  $\varphi:F(B,2)\times_B F(B,2)_|\to F(B,3)$, defined by $\varphi\left((b,b_2),(b,c_2)\right)=\left(b,b_2,c_2\right)$ whose inverse $\psi:F(B,3)\to F(B,2)\times_B F(B,2)_|$ is given by $\psi\left(b,b_2,c_2\right)=\left((b,b_2),(b,c_2)\right)$, where the fixed-point free involution in $F(B,3)$ is the restriction of the product $1_B\times \rho$ and $\rho:F(B,2)\to F(B,2),$ given by $\rho(b_2,c_2)=(c_2,b_2)$. Thus, one has the following pullbacks:
    \begin{eqnarray*}
\xymatrix@C=2cm{ 
       F(B,2)\times_B F(B,2)_| \ar[d]^{q^{\pi}}  \ar[r]^{\varphi} & F(B,3)\ar[d]^{\eta} \\
        F(B,2)\times_B F(B,2)_|/\mathrm{Z}_2 \ar[r]_{\overline{\varphi}} & F(B,3)/1_B\times \rho\\
       } \\ \\ \xymatrix@C=2cm{ 
       F(B,3)\ar[d]^{\eta}\ar[r]^{\psi} & F(B,2)\times_B F(B,2)_| \ar[d]^{q^{\pi}} \\
        F(B,3)/1_B\times \rho \ar[r]_{\overline{\psi}}& F(B,2)\times_B F(B,2)_|/\mathrm{Z}_2  \\
       }
\end{eqnarray*} then $\mathrm{secat}(q^{\pi})=\mathrm{secat}(\eta)$. 

On the other hand, we have the following pullback:
\begin{eqnarray*}
\xymatrix@C=2cm{ 
       F(B,3)  \ar@{^{(}->}[r]^{i}\ar[d]_{\eta} &  B\times F(B,2)\ar[d]^{1_B\times q^B} &\\
      F(B,3)/1_B\times \rho \ar[r]_{\overline{i} } & B\times D(B,2)&
       }
\end{eqnarray*} and thus, $\mathrm{secat}(\eta)\leq \mathrm{secat}(q^B)$.
\end{enumerate}
\end{remark}

Remark~\ref{Fadell-Neuwirth-fibration}-$2)$ together with Theorem~\ref{theorem-principal} implies the following statement.

\begin{proposition}\label{to-fadell}
Let $p:E\to B$ be a fibration with fiber $F$, $\tau:E\to E$ be a fixed-point free involution with $p\circ\tau=p$ and $\pi:F(B,2)\to B$ be the Fadell-Neuwirth fibration. Assume that $E$ is a Hausdorff space. If $\mathrm{secat}(q_E:E\to E/\tau)>\mathrm{secat}(\eta:F(B,3)\to F(B,3)/1_B\times \rho)$ then the triple $\left((p,\tau);\pi\right)$ satisfies the PBUP and $\left((E,\tau);B-\{b_0\}\right)$ satisfies the BUP.
\end{proposition}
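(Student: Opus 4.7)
The plan is to reduce the statement to a direct application of Theorem~\ref{theorem-principal} with $p'=\pi$ and fiber $F'=B-\{b_0\}$. Theorem~\ref{theorem-principal} already delivers both the PBUP for $((p,\tau);\pi)$ and the BUP for $((E,\tau);B-\{b_0\})$ as soon as one has $\mathrm{secat}(q_E)>\mathrm{secat}(q^{\pi})$. Thus the only thing one needs to establish is the identification
\[
\mathrm{secat}(q^{\pi}) \;=\; \mathrm{secat}\bigl(\eta:F(B,3)\to F(B,3)/(1_B\times\rho)\bigr),
\]
after which the hypothesis of the proposition becomes the hypothesis of the theorem verbatim.

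To make that identification I would lean on Remark~\ref{Fadell-Neuwirth-fibration}-2). That remark produces the $\mathbb{Z}_2$-equivariant homeomorphism
\[
\varphi:F(B,2)\times_B F(B,2)_|\longrightarrow F(B,3),\qquad ((b,b_2),(b,c_2))\longmapsto (b,b_2,c_2),
\]
intertwining the swap involution $\tau_2$ on the $2$-ordered fibre-wise configuration space with $1_B\times\rho$ on $F(B,3)$, together with its inverse $\psi$. Passing to quotients gives the two commutative squares displayed in that remark; by Remark~\ref{pullback}, each square is a pullback, since the horizontal arrows are $\mathbb{Z}_2$-equivariant and restrict to homeomorphisms on the fibers of the two respective $2$-sheeted covering maps. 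Item~(1) of Lemma~\ref{prop-secat-map} applied to each pullback square then yields $\mathrm{secat}(q^{\pi})\leq\mathrm{secat}(\eta)$ from the square built using $\varphi$ and $\mathrm{secat}(\eta)\leq\mathrm{secat}(q^{\pi})$ from the square built using $\psi$, hence the desired equality.

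With this equality in hand the argument closes immediately: the hypothesis $\mathrm{secat}(q_E)>\mathrm{secat}(\eta)$ rewrites as $\mathrm{secat}(q_E)>\mathrm{secat}(q^{\pi})$, and Theorem~\ref{theorem-principal} applied to the triple $((p,\tau);\pi)$ gives the two conclusions simultaneously. I do not expect any real obstacle: the proof is purely a packaging argument, and the only genuine verifications — that $\varphi$ is $\mathbb{Z}_2$-equivariant and that both resulting quotient squares are pullbacks — are read off directly from the explicit formulas for $\varphi$ and $\psi$ in Remark~\ref{Fadell-Neuwirth-fibration}-2) together with Remark~\ref{pullback}.
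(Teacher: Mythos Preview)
Your proposal is correct and follows essentially the same route as the paper: you invoke Remark~\ref{Fadell-Neuwirth-fibration}-2) to obtain $\mathrm{secat}(q^{\pi})=\mathrm{secat}(\eta)$ and then apply Theorem~\ref{theorem-principal} with $p'=\pi$ and $F'=B-\{b_0\}$. The only difference is cosmetic: you spell out the mechanism behind the equality (the equivariant homeomorphism, Remark~\ref{pullback}, and Lemma~\ref{prop-secat-map}(1)), whereas the paper simply cites Remark~\ref{Fadell-Neuwirth-fibration}-2), where that equality is already recorded.
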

\begin{proof}
From Remark~\ref{Fadell-Neuwirth-fibration}-$2)$ we have $\mathrm{secat}(q^{\pi})=\mathrm{secat}(\eta)$. Then, $\mathrm{secat}(q_E:E\to E/\tau)>\mathrm{secat}(q^{\pi})$ and thus, by Theorem~\ref{theorem-principal}, one can conclude that the triple $\left((p,\tau);\pi\right)$ satisfies the PBUP and $\left((E,\tau);B-\{b_0\}\right)$ satisfies the BUP.
\end{proof}


For fibrations over the 3-sphere we have the following example. Note that, the Fadell-Neuwirth fibration $\pi:F(S^3,2)\to S^3$ has sectional category equal to one.

\begin{example}\label{s3}
Let $p:E\to S^3$ be a fibration with fiber $F$ and $\tau:E\to E$ be a fixed-point free involution with $p\circ\tau=p$ such that $\mathrm{secat}(q_E:E\to E/\tau)>3$, then the triple $\left((p,\tau);\pi\right)$ satisfies the PBUP, where  $\pi:F(S^3,2)\to S^3$ is the Fadell-Neuwirth fibration.

\begin{eqnarray*}
\xymatrix{ 
&F\ar@{^{(}->}[d]_{ } & & \mathbb{R}^3\ar@{^{(}->}[d]_{ }\\
   E\ar[r]^{\tau}\ar[rrdd]_{p}  &  E  \ar[rdd]_{p} & & F(S^3,2)\ar[ldd]^{\pi} \\
   & & & \\
    &  &S^3& 
       }
\end{eqnarray*} 

Indeed, we consider that $S^3$ is a topological group and thus we have a $\mathbb{Z}_2$-equivariant homeomorphism $\varphi:F(S^3,3)\to S^3\times F(S^3-\{1\},2),~(z_0,z_1,z_2)\mapsto \varphi(z_0,z_1,z_2)=\left(z_0,(z_1z_0^{-1},z_2z_0^{-1})\right)$ whose inverse $\psi:S^3\times F(S^3-\{1\},2)\to F(S^3,2)$ is given by $\psi\left(z_0,(z_1,z_2)\right)=(z_0,z_1z_0,z_2z_0)$, where we consider the involution over $S^3\times F(S^3-\{1\},2)$ by $1_{S^3}\times \rho$. Recall that, $\rho:F(S^3-\{1\},2)\to F(S^3-\{1\},2)$ is defined by $\rho(z_1,z_2)=(z_2,z_1)$. Then, we have the following pullbacks 

\begin{eqnarray*}
\xymatrix@C=1cm{ 
       F(S^3,3) \ar[d]^{\eta}  \ar[r]^{\varphi} & S^3\times F(\mathbb{R}^3,2)\ar[d]^{1_{S^3}\times q^{\mathbb{R}^3}} \\
        F(S^3,2)/1_{S^3}\times\rho \ar[r]_{\overline{\varphi}} & S^3\times D(\mathbb{R}^3,2)\\
       } & & \xymatrix@C=1cm{ 
       S^3\times F(\mathbb{R}^3,2)\ar[d]^{1_{S^3}\times q^{\mathbb{R}^3}}\ar[r]^{\psi} & F(S^3,3) \ar[d]^{\eta} \\
        S^3\times D(\mathbb{R}^3,2) \ar[r]_{\overline{\psi}}& F(S^3,2)/1_{S^3}\times\rho  \\
       }
\end{eqnarray*} then $\mathrm{secat}(\eta)=\mathrm{secat}(1_{S^3}\times q^{\mathbb{R}^3})=\mathrm{secat}(q^{\mathbb{R}^3})=3$. Since, $\mathrm{secat}(q_E:E\to E/\tau)>3=\mathrm{secat}(\eta)$ together with Proposition~\ref{to-fadell}, one can conclude that the triple $\left((p,\tau);\pi\right)$ satisfies the PBUP and $\left((E,\tau);\mathbb{R}^3\right)$ satisfies the BUP. We observe that $\mathrm{secat}\left(q^{S^3}\right)=4$.
\end{example}

\begin{remark}
The most appealing situation of Example~\ref{s3} holds for $\mathrm{secat}\left(q_E:E\to E/\tau\right)=4$, as in fact $\mathrm{secat}\left(q^{S^3}:F(S^3,2)\to D(S^3,2)\right)=4$, in view of the last part in Remark~\ref{Fadell-Neuwirth-fibration}-$2)$. Indeed, it would be interesting to know whether there is a fibration $E\to S^3$ with fiber $F$ for which $E$ admits a fixed-point free involution $\tau:E\to E$ with $p\circ\tau=p$ having $\mathrm{secat}\left(q_E:E\to E/\tau\right)=4$. 
\end{remark}

Furthermore, the following example presents the PBUP between Hopf fibration and the Fadell-Neuwirth fibration.

\begin{example}\label{s2}
From Example~\ref{estrict}-$2)$, recall that $h:S^3\to S^2$ is the Hopf fibration whose fiber is $S^1$, the antipodal map $A:S^3\to S^3,~A(a,b,c,d)=(-a,-b,-c,-d)$ satisfies $h\circ A=h$ and $\mathrm{secat}\left(q_{S^3}:S^3\to \mathbb{R}P^3\right)=4$. 

\begin{eqnarray*}
\xymatrix{ 
&S^1\ar@{^{(}->}[d]_{ } & & \mathbb{R}^2\ar@{^{(}->}[d]_{ }\\
   S^3\ar[r]^{\tau}\ar[rrdd]_{h}  &  S^3  \ar[rdd]_{h} & & F(S^2,2)\ar[ldd]^{\pi} \\
   & & & \\
    &  &S^2& 
       }
\end{eqnarray*} Note that, $\mathrm{secat}\left(q^{S^2}:F(S^2,2)\to D(S^2,2)\right)=3$, and by Remark~\ref{Fadell-Neuwirth-fibration}-$2)$, we have \[\mathrm{secat}\left(\eta:F(S^2,3)\to F(S^2,3)/1_{S^2}\times\rho\right)\leq\mathrm{secat}\left(q^{S^2}:F(S^2,2)\to D(S^2,2)\right)=3,\] then $\mathrm{secat}\left(q_{S^3}:S^3\to \mathbb{R}P^3\right)=4>3\geq\mathrm{secat}\left(\eta:F(S^2,3)\to F(S^2,3)/1_{S^2}\times\rho\right)$, and thus by Proposition~\ref{to-fadell}, the triple $\left((h,A);\pi\right)$ satisfy the PBUP. It also follows from Lemma~\ref{bup-secat} together with Remark~\ref{Fadell-Neuwirth-fibration}-$1)$ and the fact that  \[\mathrm{secat}\left(q_{S^3}:S^3\to \mathbb{R}P^3\right)=4>3=\mathrm{secat}\left(q^{S^2}:F(S^2,2)\to D(S^2,2)\right).\]
\end{example}






\bibliographystyle{plain}

\end{document}